\documentclass[12pt]{amsart}
\usepackage{latexsym, amsbsy, amsmath, amsfonts, amssymb, amsthm, amscd, amscd}
\usepackage[all]{xy}
\usepackage[PostScript=dvips]{diagrams}

\usepackage{verbatim}
\textwidth=17cm
\textheight=24cm
\oddsidemargin=-0.5cm
\evensidemargin=-0.5cm
\topmargin=-0.5cm

\newcommand{\Z}{\mathbb Z}

\newtheorem{Theorem}{Theorem}[section]

\newtheorem{Corollary}{Corollary}[section]
\newtheorem{Proposition}{Proposition}[section]

\allowdisplaybreaks[2]
\numberwithin{equation}{section}
\numberwithin{figure}{section}
\title{On Vassiliev invariants of braid groups of the sphere 
}
\author[Kaabi]{N.~Kaabi}
\address{L'Institut Supérieur des Etudes Technologiques (ISET) du Kef,
Campus Universitaire, Boulifa, 7100 Le Kef, Tunis} 
\author[Vershinin]{V.~V.~Vershinin}

\address{D\'epartement des Sciences Math\'ematiques,
                                    Universit\'e Montpellier II,
Place Eug\`ene Bataillon,
34095 Montpellier cedex 5, France}
\email{ vershini@math.univ-montp2.fr}

\address{Sobolev Institute of Mathematics, Novosibirsk 630090,
Russia } 
\email{ versh@math.nsc.ru}

\subjclass[2000]{Primary 20F36; Secondary 20F38, 57M, 17B}

\keywords{Vassiliev invariants, braid group, mapping class group, group ring, Lie algebra}
\begin{document}
\begin{abstract}
We construct a universal Vassiliev invariant for braid groups of the sphere and
the mapping class groups of the sphere with $n$ punctures. The case of a 
sphere is different from  the classical braid groups or braids of oriented
surfaces of genus strictly greater than zero, since  Vassiliev
invariants in a group without  2-torsion do not distinguish elements
of braid group  of a sphere.
\end{abstract}
\maketitle
\tableofcontents

\section{Introduction}
The theory of Vassiliev  (or finite type) invariants starts with the works of 
V.~A.~Vasiliev \cite{Va1, Va2} 
though the ideas which lie in the foundations of this theory 
can be found in the work of M.~Gousarov \cite{Gu}. 
The basic idea is classical in Mathematics: to introduce a filtration in 
a complicated fundamental object such that the corresponding associated 
graded object is simpler and sometimes possible to describe. The construction of
spectral sequences have similar features. A lot of progress had been done
during the last couple of decades in the theory of Vassiliev invariants 
of knots, the basic object of the study. Also similar  constructions
were done for braids. In 1996 T.~Stanford \cite{St} established a relation
between Vassiliev invariants and the lower central series of the pure braid group.
Later S.~Papadima \cite{Pa} constructed
a universal Vassiliev invariant (``Kontsevich integral") over $\Z$ for classical braids. A similar   construction was done by J.~Mostovoy and S.~Willerton \cite{MW}. 
For braids on oriented closed surfaces  of genus $g\geq 1$ this was done by
J.~Gonz\'ales-Meneses and L.~Paris \cite{GMP}. This universal Vassiliev invariant
is not multiplicative. It was shown by Bellingeri and Funar \cite{BelF}
a multiplicative universal  invariant  does not exist in the case of positive genus ($g\geq 1$).  
In the present work we consider the case of the sphere $S^2$. This case  is interesting because the braid
groups on the sphere contains torsion and Vassiliev invariants in rational numbers
do not give a complete set of invariants. Namely M.~Eisermann \cite{Ei1, Ei2} gave the example of a two different  elements 
in $B_n(S^2)$ which are
not distinguished by Vassiliev invariants in rational numbers.  Our universal invariant
is not multiplicative. Usually multiplicative universal Vassiliev invariant
is constructed over rational numbers, which is not good for the sphere, but on the other hand there is no interdiction of existence of such an invariant over the 
 integers.

We also study the mapping class group of the sphere with $n$ punctures, this group is
closely connected with the braid group (on $n$-strands) of the sphere. From algebraic point of view
the mapping class group of the sphere with $n$ punctures is obtained from 
the braid group of the sphere if we add one more relation (\ref{eq:sphe_mc}) 
to the presentation of the braid group of the sphere.
It turns out that for these two types of groups  as for the  braid groups examined
before the study of
Vassiliev invariants, topological by their nature  reduces to purely algebraic 
constructions and facts.

The authors are thankful to Paolo Bellingeri for valuable remarks. 

This work was finished during the visit of the second author to the IHES, so he would like to express his great appreciation to the administration of the Institute for
the invitation and the excellent conditions of work.

\section{Braid groups of the sphere and the mapping class groups of
the  sphere with $n$ punctures 
}

Let $M$ be a topological space and let $M^n$ be the $n$-fold Cartesian product
of $M$. The \textit{$n$-th ordered configuration space} $F(M,n)$ is defined
by
$$
F(M,n)=\{(x_1,\ldots, x_n)\in M^n \ | \ x_i\not=x_j \textrm{ for }
i\not=j\}
$$
with subspace topology of $M^n$. The symmetric group $\Sigma_n$ acts
on $F(M,n)$ by permuting coordinates. The orbit space $$B(M,n)=F(M,n)/\Sigma_n$$ is called the \textit{$n$-th unordered configuration space} or simply  \textit{$n$-th  configuration space}.
The \textit{braid group} $B_n(M)$ is defined to be the fundamental group
$\pi_1(B(M,n))$. The \textit{pure braid group} $P_n(M)$ is defined to be the fundamental
group  $\pi_1(F(M,n)$. From the covering $F(M,n)\to F(M,n)/\Sigma_n$,
we get a short exact sequence of groups
\begin{equation}
\{1\}\to P_n(M)\to B_n(M)\to \Sigma_n\to \{1\}.
\label{eq:br_ex_sec} 
\end{equation}
We will use later the following classical Fadell-Neuwirth Theorem.
\begin{Theorem}\cite{FN}\label{theorem3.1}
For $n>m$ the coordinate projection (forgetting of $n-m$ coordinates)
$$
\delta^{(n)}_m\colon F(M,n)\to F(M,m), \ (x_1,\ldots,x_n)\mapsto (x_1,\ldots,x_m)
$$
is a fiber bundle with fiber $F(M\smallsetminus Q_{m}, n-m)$, where $Q_{m}$ is a set of $m$ distinct points in $M$.
\end{Theorem}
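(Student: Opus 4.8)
My plan is to prove the assertion by exhibiting explicit local trivializations of $\delta^{(n)}_m$, so that the fiber-bundle property reduces to the local homogeneity of the manifold $M$. (One may instead first treat the one-step projection $\delta^{(n)}_{n-1}\colon F(M,n)\to F(M,n-1)$ and iterate, but the trivialization below works uniformly for every $m<n$.) I would begin by fixing a base point $b=(x_1,\dots,x_m)\in F(M,m)$ and computing the fiber. The preimage $(\delta^{(n)}_m)^{-1}(b)$ is the set of tuples $(x_{m+1},\dots,x_n)$ whose coordinates are pairwise distinct and distinct from $x_1,\dots,x_m$, which is precisely $F(M\smallsetminus\{x_1,\dots,x_m\},n-m)$; fixing any homeomorphism $M\smallsetminus Q_m\cong M\smallsetminus\{x_1,\dots,x_m\}$ (available because any two $m$-point subsets of a connected manifold are interchanged by a self-homeomorphism) identifies this with the standard fiber $F(M\smallsetminus Q_m,n-m)$.

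The geometric core is to build, over a suitable neighborhood $U$ of $b$ in $F(M,m)$, a continuous family of self-homeomorphisms $h_y\colon M\to M$ indexed by $y=(y_1,\dots,y_m)\in U$, with $h_b=\mathrm{id}_M$ and $h_y(x_i)=y_i$ for $1\le i\le m$. To do this I would choose pairwise disjoint coordinate charts $V_1,\dots,V_m$ with $x_i\in V_i\cong\R^d$, where $d=\dim M$, and shrink $U$ so that every $y\in U$ satisfies $y_i\in V_i$. Inside each chart I would use a compactly supported radial isotopy that drags the center $x_i$ to the nearby point $y_i$ and equals the identity near $\partial V_i$; setting $h_y$ to be the identity off $\bigcup_i V_i$ and to be this push on each $V_i$ produces a homeomorphism depending continuously on $y$ in the compact-open topology.

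Granting such a family, the local trivialization over $U$ is given by
\[
\phi\colon U\times F(M\smallsetminus\{x_1,\dots,x_m\},n-m)\longrightarrow (\delta^{(n)}_m)^{-1}(U),\qquad \phi\bigl(y,(z_{m+1},\dots,z_n)\bigr)=\bigl(y_1,\dots,y_m,h_y(z_{m+1}),\dots,h_y(z_n)\bigr).
\]
This is well defined: since the $z_j$ are distinct and differ from every $x_i$, and $h_y$ is injective with $h_y(x_i)=y_i$, the points $h_y(z_j)$ are pairwise distinct and differ from every $y_i$, so the image lies in $F(M,n)$ and maps to $y$ under $\delta^{(n)}_m$. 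The assignment $(y,w_{m+1},\dots,w_n)\mapsto\bigl(y,h_y^{-1}(w_{m+1}),\dots,h_y^{-1}(w_n)\bigr)$ is a continuous inverse, so $\phi$ is a homeomorphism commuting with the projections to $U$. Covering $F(M,m)$ by such neighborhoods yields the local triviality, hence the fiber-bundle structure with fiber $F(M\smallsetminus Q_m,n-m)$. I expect the main obstacle to be the second step: producing the family $h_y$ and, in particular, verifying that $y\mapsto h_y$ is continuous, which is exactly the point where the manifold structure of $M$, through its local homogeneity, is indispensable.
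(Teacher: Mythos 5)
The paper offers no proof of this statement---it is quoted with a citation to Fadell--Neuwirth \cite{FN}---and your argument is essentially the classical one from that source: compute the fiber, then trivialize the projection locally over a product of disjoint charts by a continuously varying family of ambient ``push'' homeomorphisms $h_y$ carrying the base configuration $(x_1,\dots,x_m)$ to a nearby $(y_1,\dots,y_m)$. Your proof is correct as written, with the one hypothesis (implicit in the citation, and worth making explicit since the paper's setup allows arbitrary $M$) that $M$ is a topological manifold without boundary: both the chart construction of $h_y$ and the homogeneity used to identify $M\smallsetminus\{x_1,\dots,x_m\}$ with $M\smallsetminus Q_m$ fail at boundary points, and indeed the projection $F(D^2,n)\to F(D^2,m)$ for the closed disc is not a fiber bundle over configurations touching $\partial D^2$.
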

In this work we consider the case $M=S^2$ and classical braids which are braids of the disc: $M=D^2$.
 
Usually  the braid group of the disc $Br_n= B_n(D^2)$ 
is given by the following Artin presentation \cite{Art1}.
It has the generators $\sigma_i$, 
$i=1, ..., n-1$, and two types of relations: 
\begin{equation}
 \begin{cases} \sigma_i \sigma_j &=\sigma_j \, \sigma_i, \ \
\text{if} \ \ |i-j| >1,
\\ \sigma_i \sigma_{i+1} \sigma_i &= \sigma_{i+1} \sigma_i \sigma_{i+1} \, .
\end{cases} \label{eq:brelations}
\end{equation}

The generators $a_{i,j}$, $1\leq i<j\leq n $ of the pure
braid group $P_n$ (of a disc) can be described as elements of the braid 
group $Br_n$  by the formula:
$$a_{i,j}=\sigma_{j-1}...\sigma_{i+1}\sigma_{i}^2\sigma_{i+1}^{-1}...
\sigma_{j-1}^{-1}.$$ 
The defining relations among $a_{i,j}$, which are called the \emph{Burau
relations} (\cite{Bu1}, \cite{Ma}) are as follows: 
\begin{equation}
\begin{cases}
a_{i,j}a_{k,l}=a_{k,l}a_{i,j}
\ \text {for} \ i<j<k<l \ \text {and} \ i<k<l<j, \\
a_{i,j}a_{i,k}a_{j,k}=a_{i,k}a_{j,k}a_{i,j} \ \text {for} \
i<j<k, \\
a_{i,k}a_{j,k}a_{i,j}=a_{j,k}a_{i,j}a_{i,k} \ \text
{for} \ i<j<k, \\
a_{i,k}a_{j,k}a_{j,l}a_{j,k}^{-1}=a_{j,k}a_{j,l}a_{j,k}^{-1}a_{i,k}
\ \text {for} \ i<j<k<l.\\
\end{cases}
\label{eq:burau}
\end{equation}

\smallskip

It was proved by O.~Zariski \cite{Za1} and then
rediscovered by E.~Fadell and J.~Van Buskirk \cite{FaV} that a presentation of the braid group
on a 2-sphere can be given with the   generators 
$\sigma_i$, $i=1, ..., n-1$, the same as for the classical braid group, satisfying
the braid relations (\ref{eq:brelations})
and the following sphere relation: 
\begin{equation}
\sigma_1 \sigma_2 \dots \sigma_{n-2}\sigma_{n-1}^2\sigma_{n-2} \dots
\sigma_2\sigma_1 =1.
\label{eq:spherelation}
\end{equation}
 
Let $\Delta$ be the Garside's   fundamental element   in the braid 
group $Br_{n}$ \cite{Gar}. It can be expressed in particular by the following
word in canonical generators:
$$\Delta = \sigma_1 \dots \sigma_{n-1} \sigma_1 \dots \sigma_{n-2} \dots  
\sigma_1 \sigma_2 \sigma_1.$$
If we use Garside's notation $\Pi_t\equiv \sigma_1\dots \sigma_t$, 
$1\leq t\leq n-1$, then
$\Delta \equiv \Pi_{n-1} \dots \Pi_1$.

For the pure braid group on a 2-sphere let us introduce the elements
$a_{i,j}$ for all $i, j$ by the formulas:  
\begin{equation}
\begin{cases}
a_{j,i}= a_{i,j} \ \ \text{for} \ i<j\leq n,\\
a_{i,i}= 1. 
\label{eq:aji}
\end{cases}
\end{equation}
The pure braid group on a 2-sphere has the generators $a_{i,j}$
which satisfy the Burau relations (\ref{eq:burau}), the relations (\ref{eq:aji}),
 and the following relations \cite{GVB}:
\begin{equation*}
a_{i,i+1}a_{i,i+2} \dots a_{i,i+n-1} = 1 \ \ \text{for all} \ i\leq n,\\
\end{equation*}
with the convention that  indices are considered $\mod n$: $k+n =k$.
Note that $\Delta^2$ is a pure braid and can be expressed by the following
formula
\begin{multline*}
\Delta^2 = (a_{1,2}a_{1,3} \dots a_{1,n})(a_{2,3}a_{2,4} \dots a_{2,n})
\dots (a_{n-1,n}) = \\
(a_{1,2})(a_{1,3} a_{2,3})( a_{1,4}a_{2,4}a_{3,4}) \dots (a_{1,n} \dots
\dots a_{n-1,n}).
\end{multline*}
This element of the braid group generates its center.

Another object of our study is the mapping class groups of the sphere with $n$
punctures.
The (general) mapping class group is an  important object in Topology,
Complex Analysis, Algebraic Geometry and other domains. 
It is a rare situation when the method of Algebraic Topology works
perfectly well: the application of the functor of fundamental group
completely solves the topological problem: group of isotopy classes of
homeomorphisms
is described in terms of automorphisms of the fundamental group
of the corresponding surface, as  the Dehn-Nilsen-Baer theorem states 
(see \cite{Iv}, for example).

Let  $S_{g,b,n}$ be an oriented surface of genus $g$ with $b$ boundary 
components and we remind that  $Q_n$ denotes a set of $n$ punctures (marked points)
in the surface. Consider the group 
$\operatorname{Homeo}(S_{g,b,n})$ 
of orientation preserving
self-homeomorphisms of $S_{g,b,n}$ which fix pointwise the boundary (if
$b>0$) and map the set $Q_n$ into itself. 
 
Let 
$\operatorname{Homeo}^0(S_{g,b,n})$ be the normal subgroup of 
self-homeomorphisms of $S_{g,b,n}$ which are isotopic to identity. 
Then the {\it mapping class group} $ {M}_{g,b,n}$ is defined as a 
quotient group

\begin{equation*}
 {M}_{g,b,n} = \operatorname{Homeo}(S_{g,b,n})/
\operatorname{Homeo}^0(S_{g,b,n}).
\end{equation*}
These groups are closely connected with braid groups.
W.~Magnus in \cite{Mag1} interpreted the $n$-braid group as the mapping class group of an $n$-punctured disc with the  fixed boundary:
\begin{equation*}
Br_n \cong {M}_{0,1,n}. 
\end{equation*}
Like braid groups  the groups
$ {M}_{g,b,n}$ has a natural epimorphism to the symmetric group  
$\Sigma_{n}$ with the kernel called the  {\it pure mapping class group} ${PM}_{g,b,n}$,
so there exists an exact sequence:
\begin{equation*}
1 \to {PM}_{g,b,n} \to  {M}_{g,b,n} \to \Sigma_{n}\to 1.
\end{equation*}
Geometrically the pure mapping class group $ {PM}_{g,b,n}$ 
consists of isotopy classes of homeomorphisms that preserve the  punctures pointwise.

In the present work we consider the pure mapping class group 
$ {PM}_{0,0,n}$ of a punctured 2-sphere (so the genus is equal to 0) with
no boundary components that we simply denote by $ {PM}_{n}$; the same way
we denote further $ {M}_{0,0,n}$ simply by $ {M}_{n}$. 

The group $ {PM}_{n}$ is closely related to the pure braid group
$P_n(S^2)$ on the 2-sphere as well as its non-pure analogue 
$ {M}_{n}$
is related with the (total) braid group $B_n(S^2)$ on the 2-sphere. 

 W.~Magnus obtained in \cite{Mag1} (see also 
\cite{MKS}) a presentation of the 
mapping class group $ {M}_{n}$ for the $n$-punctured 2-sphere. 
It has the same generators as $B_n(S^2)$ and a complete set of relations consists
of (\ref{eq:brelations}), (\ref{eq:spherelation})
 and  the following relation 
\begin{equation}
(\sigma_1 \sigma_2 \dots \sigma_{n-2}\sigma_{n-1})^n =1.
\label{eq:sphe_mc}
\end{equation}
This defines an epimorphism
\begin{equation*}
\gamma:B_n(S^2) \to M_n.
\end{equation*} 
For the generators $\sigma_1,$ $\sigma_2$, $\dots$, $\sigma_{n-2},$
$\sigma_{n-1}$,
 subject to the braid relations (\ref{eq:brelations}) the 
condition (\ref{eq:sphe_mc}) is equivalent to the following relation
\begin{equation*}
\Delta^2 =1.
\label{eq:sphe_mcD}
\end{equation*}

Using Theorem~\ref{theorem3.1} we have the following morphism of fibrations
\begin{equation}
\begin{diagram}
F(D^2\smallsetminus\{p_1,p_2\}, n-2)&\rTo^{i} &F(D^2,n)&\rTo^
{\delta_2}& F(D^2,2)\\
\dTo & & \dTo
&&\dTo
\\
F(S^2\smallsetminus\{p_1,p_2,p_3\}, n-2)&\rTo^{i}&F(S^2,n+1) &\rTo^{\delta_3}& F(S^2,3),\\
\end{diagram}
\label{eq:3.1}
\end{equation}
where the vertical lines are induced by an inclusion of a disc into the sphere.
Let us denote by $P_n(S^2_3)$ the pure braid group on $n$ strands of a 2-sphere 
with three points deleted or equivalently the subgroup of the pure braid group 
on $n+2$ strands of a disc where (say, the last) two strands are fixed
as trivial (unbraided) strands which is also equal to the fundamental group 
of $F(S^2\smallsetminus\{p_1,p_2,p_3\}, n)$. 

The following statement follows from the normal forms of the groups
$P_n(S^2)$ and ${PM}_{n}$ given in \cite{GVB} and on the geometrical level it 
follows from diagram (\ref{eq:3.1}) and it was expressed in \cite{GG2}. Note that the groups $P_2(S^2)$ and ${PM}_{3}$ are trivial.
\begin{Theorem} (i) The pure braid group on a 2-sphere $P_n(S^2)$ for $n\geq 3$
is isomorphic to the direct product
of the cyclic group $C_2$ of order 2 (generated by $\Delta^2$) and ${PM}_{n}$.
\par
(ii) The pure braid group $P_n$ for $n\geq 2$ is isomorphic to the direct product
of the infinite cyclic group $C$ (generated by $\Delta^2$) and ${PM}_{n+1}$. 
\par 
(iii) The groups ${PM}_{n}$ and $P_{n-3}(S^2_3)$ are isomorphic for 
$n\geq 4$.
\par
(iv) There is a commutative diagram of groups and homomorphisms 
\begin{equation}
\begin{diagram}
P_n\cong& PM_{n+1}\times C\\
\dTo>{\rho_p}&\dTo>{\delta\times\rho}\\
P_n(S^2)\cong& PM_n\times C_2,\\
 \end{diagram} \label{eq:pmc}
\end{equation} 
where $\rho_p$ is the canonical epimorphism $P_n\to P_n(S^2)$,
$\delta$ is induced by the  Fadell-Neuwirth  fibration via the isomorphism of item (iii) and $\rho$ is the canonical epimorphism
of the infinite cyclic group onto the cyclic group of order 2. 
\label{theo:gil-vb} 
\end{Theorem}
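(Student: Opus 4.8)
The plan is to read all four statements off two Fadell--Neuwirth fibrations (Theorem~\ref{theorem3.1}), one for $S^2$ and one for $D^2$, together with the elementary homeomorphism $D^2\smallsetminus Q_2\cong S^2\smallsetminus Q_3$. For the sphere I forget all but three points,
$$F(S^2\smallsetminus Q_3,n-3)\to F(S^2,n)\xrightarrow{\ \delta_3\ }F(S^2,3),$$
and use that $PSL_2(\C)$ acts simply transitively on ordered triples of distinct points, so $F(S^2,3)\cong PSL_2(\C)$, which deformation retracts onto $SO(3)\cong\R P^{3}$. Hence $\pi_1F(S^2,3)=C_2$ and $\pi_2F(S^2,3)=0$, so the long exact homotopy sequence collapses to a short exact sequence
$$1\to P_{n-3}(S^2_3)\to P_n(S^2)\xrightarrow{\ \pi\ }C_2\to 1.\qquad(\ast)$$
For the disc I forget all but two points, $F(D^2\smallsetminus Q_2,n-2)\to F(D^2,n)\to F(D^2,2)$, whose base is homotopy equivalent to $S^1$, giving
$$1\to P_{n-2}(D^2_2)\to P_n\xrightarrow{\ q\ }C\to 1,\qquad(\ast\ast)$$
with $C\cong\Z$.

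The crucial geometric input is that in both sequences the full twist $\Delta^2$ maps to a generator of the base. Projecting $\Delta^2$ by $\delta_3$ onto the first three strands yields the full twist on three points of $S^2$, i.e.\ the loop rotating the whole sphere once about a fixed axis; in $\pi_1SO(3)=C_2$ this is the nontrivial element, so $\pi(\Delta^2)$ generates $C_2$, and the analogous computation gives that $q(\Delta^2)$ generates $C$. Since $\Delta^2$ is central and (as recorded in the statement) has order $2$ in $P_n(S^2)$ and infinite order in $P_n$, in each case $\langle\Delta^2\rangle$ maps isomorphically onto the base and meets the kernel trivially; being central and normal, it is a direct complement. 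Thus $P_n(S^2)\cong P_{n-3}(S^2_3)\times C_2$ and $P_n\cong P_{n-2}(D^2_2)\times C$. Recalling $PM_n=P_n(S^2)/\langle\Delta^2\rangle$ (the pure part of $\gamma\colon B_n(S^2)\to M_n$) and quotienting $(\ast)$ by $\langle\Delta^2\rangle$ identifies $P_{n-3}(S^2_3)\cong PM_n$, which is statement (iii); feeding this back gives (i). For (ii) the homeomorphism $D^2\smallsetminus Q_2\cong S^2\smallsetminus Q_3$ yields $P_{n-2}(D^2_2)\cong P_{n-2}(S^2_3)\cong PM_{n+1}$ by (iii) at index $n+1$ (for $n\ge 3$; the cases $n=2,3$, where $PM_3$ is trivial, are immediate). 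Hence $P_n\cong C\times PM_{n+1}$.

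For (iv) I would factor the capping epimorphism $\rho_p\colon P_n\to P_n(S^2)$ as $\rho_p=f\circ\rho_p'$, where $\rho_p'\colon P_n\to P_{n+1}(S^2)$ is induced by including $D^2$ as the complement of a disc (the boundary becoming the $(n+1)$st puncture) and $f\colon P_{n+1}(S^2)\to P_n(S^2)$ forgets that extra puncture. The map $\rho_p'$ is exactly the effect on $\pi_1$ of the morphism of fibrations (\ref{eq:3.1}): on fibers it is the homeomorphism-induced isomorphism $P_{n-2}(D^2_2)\cong P_{n-2}(S^2_3)$, and on bases it is the reduction $C\to C_2$ (again the generator is a single $2\pi$ rotation), so under (i)--(ii) one reads $\rho_p'=\mathrm{id}_{PM_{n+1}}\times\rho$. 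The forgetful map $f$ is itself a Fadell--Neuwirth map: on the central factors it sends $\Delta^2$ to $\Delta^2$ (the identity $C_2\to C_2$), and on the fiber factors it is the strand-forgetting map $P_{n-2}(S^2_3)\to P_{n-3}(S^2_3)$, which is by definition $\delta$. Thus $f=\delta\times\mathrm{id}_{C_2}$, and composing gives $\rho_p=\delta\times\rho$, which is the commutativity of (\ref{eq:pmc}).

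The main obstacle is not any single computation but the naturality bookkeeping behind (iv): one must verify that the several copies of $P_{n-2}(S^2_3)$ and $PM_{n+1}$ arising from (ii), from (iii) at index $n+1$, and from the fiber of (\ref{eq:3.1}) are all identified by the same inclusion-induced isomorphisms, so that $\rho_p'$ genuinely restricts to the identity on the $PM$-factor. This compatibility is precisely what diagram (\ref{eq:3.1}) encodes, and it is also transparent from the explicit normal forms of \cite{GVB}; throughout, the clean conceptual point is that the order-$2$ torsion of $P_n(S^2)$ is exactly the reflection of $\pi_1SO(3)=C_2$, carried by the central element $\Delta^2$.
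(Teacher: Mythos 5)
Your proof is correct in its essentials, but note how it sits relative to the paper: the paper offers no proof of this theorem at all --- it records that the statement follows from the normal forms of $P_n(S^2)$ and $PM_n$ in \cite{GVB}, that ``on the geometrical level'' it follows from diagram (\ref{eq:3.1}), and that it was expressed in \cite{GG2}. What you have done is actually execute that geometric route: identify the bases of the two Fadell--Neuwirth fibrations ($F(S^2,3)\cong PSL_2(\C)\simeq SO(3)$ and $F(D^2,2)\simeq S^1$), extract the two short exact sequences, recognize $\Delta^2$ as the $2\pi$-rotation loop so that it hits a generator of each base, and use centrality to produce the direct-product splittings; part (iv) then comes from factoring $\rho_p$ through $P_{n+1}(S^2)$, which is exactly what diagram (\ref{eq:3.1}) encodes. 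Compared with the algebraic route via normal forms that the paper cites, your argument explains conceptually where the $C_2$ factor comes from (it is $\pi_1 SO(3)$, carried by the full twist) and makes the compatibility claims in (iv) verifiable rather than merely quotable.

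One blemish must be repaired: you justify that $\Delta^2$ has order $2$ in $P_n(S^2)$ ``as recorded in the statement,'' which is circular --- that assertion is part of what is being proved, and surjectivity of $\langle\Delta^2\rangle\to C_2$ alone says nothing about $(\Delta^2)^2$. Either cite the classical fact (Fadell--Van Buskirk, or \cite{GVB}), or, better, stay inside your own framework: the sharply $3$-transitive action of $PSL_2(\C)$ gives a continuous section of $\delta_3\colon F(S^2,n)\to F(S^2,3)$, namely
$$(x_1,x_2,x_3)\mapsto \bigl(x_1,x_2,x_3,g(p_4),\dots,g(p_n)\bigr),$$
where $g\in PSL_2(\C)$ is the unique M\"obius transformation with $g(p_i)=x_i$ for $i=1,2,3$. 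On $\pi_1$ this is a group-theoretic section $C_2\to P_n(S^2)$ of your sequence $(\ast)$, and it sends the generator (the rotation loop) to $\Delta^2$; hence $(\Delta^2)^2=1$ with no outside input, and since the image is central the direct-product decomposition follows at once. (Your appeal to ``infinite order in $P_n$'' needs no such care: a cyclic group surjecting onto $\Z$ is automatically infinite cyclic.) With this repair your argument is complete and self-contained, which is more than the paper itself provides for this statement.
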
 

The isomorphism of the part (i) of Theorem~\ref{theo:gil-vb} is compatible 
with the homomorphisms
$p_i:P_n(S^2)\to P_{n-1}(S^2)$ and
$pm_i : {PM}_{n} \to {PM}_{n-1}$  
consisting of deleting of one strand or forgetting one point, that means that the following diagram is commutative 
\begin{equation*}
\begin{diagram}
P_n(S^2)\cong& PM_{n}\times C_2\\
\dTo>{p_i}&\dTo>{{pm_i}\times{id}}\\
P_{n-1}(S^2)\cong& PM_{n-1}\times C_2.\\
 \end{diagram} 
\end{equation*}

\section{Lie algebras from descending central series of groups 
}

Lie algebras obtained from the filtration of descending central series of 
the pure braid groups are essential ingredients in the construction
of the universal Vassiliev invariant. We describe such Lie algebras
for the groups $P_n(S^2)$ and $\mathcal {PM}_{0,n}$ in this section.
We will use them in the next section in our construction of universal
invariant.

For a group $G$ the descending central series 
\begin{equation*}
G =\Gamma_1  > \Gamma_2 > \dots  > \Gamma_i > \Gamma_{i+1} > \dots .
\end{equation*}
\noindent
is defined by the formula
\begin{equation*}
\Gamma_1 = G, \ \ \Gamma_{i+1} =[\Gamma_{i}, G].
\end{equation*}
This series of   groups gives rise to the 
associated graded Lie algebra (over $\Z$) $gr^*_\Gamma(G)$ 
\begin{equation*}
gr^i_\Gamma(G)= \Gamma_i/\Gamma_{i+1}.
\end{equation*}

A presentation of the Lie algebra $gr^*_\Gamma(P_n)$ for the pure braid group
was done in the work of T.~Kohno \cite{K}, and
can be described as follows. It is the quotient of the
free Lie algebra $L[A_{i,j}| \, 1 \leq i < j \leq n]$ generated by
elements $A_{i,j}$ with $1 \leq i < j \leq n$ modulo the
``infinitesimal braid relations" or ``horizontal $4T$ relations"
given as follows:

\begin{equation}
\begin{cases}
 [A_{i,j}, A_{s,t}] = 0, \  \text{if} \ \{i,j\} \cap \{s,t\} = \phi, \\
 [A_{i,j}, A_{i,k} + A_{j,k}] = 0, \  \text{if} \ i<j<k , \\
  [A_{i,k}, A_{i,j} + A_{j,k}] = 0, \ \text{if} \ i<j<k. \\
  \end{cases}
  \label{eq:kohno}
\end{equation}
It is convenient sometimes to have conventions like (\ref{eq:aji}).
So let us introduce the generators $A_{i,j}, \, 1 \leq i ,j \leq n$,
not necessary $i < j$, by the formulae
\begin{equation*}
\begin{cases}
A_{j,i} =  A_{i,j} \ \ \text{for} \ \ 1\leq i < j \leq n ,\\
A_{i,i} = 0 \ \ \text{for all} \ \ 1\leq i \leq n .\\
\end{cases}
\end{equation*}
For this set of generators the defining relations (\ref{eq:kohno})
can be rewritten as follows
\begin{equation}
\begin{cases}
A_{i,j} =  A_{j,i} \ \ \text{for} \ \ 1\leq i,j \leq n ,\\
A_{i,i} = 0 \ \ \text{for} \ \ 1\leq i \leq n ,\\
 [A_{i,j}, A_{s,t}] = 0, \  \text{if} \ \{i,j\} \cap \{s,t\} = \phi, \\
 [A_{i,j}, A_{i,k} + A_{j,k}] = 0 \ \  \text{for all different} \ \ i,j,k .\\
\end{cases}
    \label{eq:kohn_sym}
\end{equation}

Y.~Ihara in \cite{Ih} gave  a presentation of the Lie algebra 
$gr^*_\Gamma(P_n(S^2))$ 
of the pure braid group of a sphere. 
It is the quotient of the
free Lie algebra $L[B_{i,j}| \, 1 \leq i ,j \leq n]$ generated by
elements $B_{i,j}$ with $1 \leq i , j \leq n$ modulo the
 following  relations:
\begin{equation}
\begin{cases}
B_{i,j} =  B_{j,i} \ \ \text{for} \ 1\leq i,j \leq n , \\
B_{i,i} =  0 \ \ \text{for} \ 1\leq i \leq n , \\
 [B_{i,j}, B_{s,t}] = 0, \  \text{if} \ \{i,j\} \cap \{s,t\} = \phi, \\
 \sum_{j=1}^n B_{i,j} = 0, \  \text{for} \ 1\leq i \leq n. \\
    \end{cases}
    \label{eq:ihara}
\end{equation}

\smallskip

\noindent
It is also a quotient algebra of the Lie algebra $gr^*_\Gamma(P_n)$: the  relations 
of the last type in 
(\ref{eq:kohn_sym}) are the consequences of the third and the forth type 
relations in (\ref{eq:ihara}).

We shall use the following results which were proved in \cite{KV}.
\begin{Theorem} (i) The graded Lie algebra $gr^*_\Gamma( {PM}_{n})$ 
is the quotient of the
free Lie algebra $L[B_{i,j}| \, 1 \leq i ,j \leq n]$ modulo the
 following  relations:
\begin{equation}
\begin{cases}
B_{i,j} =  B_{j,i} \  \text{for} \ 1\leq i,j \leq n , \\
B_{i,i} =  0 \ \ \text{for} \ 1\leq i \leq n , \\
 [B_{i,j}, B_{s,t}] = 0, \  \text{if} \ \{i,j\} \cap \{s,t\} = \phi, \\
 \sum_{j=1}^n B_{i,j} = 0, \  \text{for} \ 1\leq i \leq n, \\
 \medskip
 \sum_{i=1}^{n-1}\sum_{j=i+1}^n B_{i,j} = 0. \\ 
    \end{cases}
    \label{eq:m0n}
\end{equation} 
(ii) The graded Lie algebra $gr^*_\Gamma( {PM}_{n})$ is the
quotient of the free Lie algebra $L[B_{i,j}| \, 1 \leq i , j \leq
n-1]$ generated by the elements 
$B_{i,j}, \, 1 \leq i , j \leq n-1,$ (smaller number of generators than in (i)) modulo the following relations:
\begin{equation}
\begin{cases}
B_{i,j} - B_{j,i}=0 \ \  \text{for} \ \ 1\leq i, j \leq n-1 , \\
B_{i,i} = 0 \ \ \text{for} \ \ 1\leq i \leq n-1 , \\
[B_{i,j}, B_{s,t}] = 0,  \ \ \text{if} \ \ \{i,j\} \cap \{s,t\} = \emptyset , \\
[B_{i,j}, B_{i,k} + B_{j,k}] = 0  \ \ \text{for all different} \ \ i,j,k , \\
 \smallskip
\sum_{i=1}^{n-2}\sum_{j=i+1}^{n-1} B_{i,j} = 0 . \\
\end{cases}
  \label{eq:m0n2}
\end{equation}
  \label{theo:lie_m0n}
\end{Theorem}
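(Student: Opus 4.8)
The plan is to derive both presentations from Ihara's presentation (\ref{eq:ihara}) of $gr^*_\Gamma(P_n(S^2))$ together with the splitting $P_n(S^2)\cong {PM}_{n}\times C_2$ of Theorem~\ref{theo:gil-vb}(i). For a direct product $G=H\times K$ one has $\Gamma_i(G)=\Gamma_i(H)\times\Gamma_i(K)$, hence $gr^*_\Gamma(G)=gr^*_\Gamma(H)\oplus gr^*_\Gamma(K)$ as graded Lie algebras. Since $C_2$ is abelian of order $2$, its associated graded algebra is $\Z/2$ concentrated in degree $1$. Thus $gr^i_\Gamma(P_n(S^2))=gr^i_\Gamma({PM}_{n})$ for $i\ge 2$, while in degree $1$ the $C_2$–summand is the line spanned by the image of the generator $\Delta^2$. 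Under Ihara's identification this image is $c:=\sum_{i<j}B_{i,j}$, because $\Delta^2=\prod_{i<j}a_{i,j}$.

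For part (i) I would then show that $c$ generates exactly this extra summand as a Lie ideal. First, $c$ is $2$–torsion: summing the relation $\sum_{j}B_{i,j}=0$ over $i$ and using antisymmetry gives $2c=\sum_{i\ne j}B_{i,j}=0$. Next, $c$ is central; indeed, for any $s,t$ only the brackets $[B_{i,j},B_{s,t}]$ with $\{i,j\}\cap\{s,t\}\ne\emptyset$ survive the disjointness relation, the pair $\{s,t\}$ contributes a self–bracket, and grouping the remaining terms gives
\[
[c,B_{s,t}]=\Bigl[\sum_{k\ne s,t}B_{s,k},B_{s,t}\Bigr]+\Bigl[\sum_{k\ne s,t}B_{t,k},B_{s,t}\Bigr]=[-B_{s,t},B_{s,t}]+[-B_{s,t},B_{s,t}]=0,
\]
where $\sum_{k}B_{s,k}=0$ was used to rewrite each inner sum as $-B_{s,t}$. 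Hence the ideal generated by $c$ is the central line $\Z/2\cdot c$, which coincides with the kernel $gr^*_\Gamma(C_2)$ of the projection $gr^*_\Gamma(P_n(S^2))\to gr^*_\Gamma({PM}_{n})$. Therefore $gr^*_\Gamma({PM}_{n})$ is the quotient of Ihara's algebra by the single relation $c=0$, which is precisely the list (\ref{eq:m0n}).

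For part (ii) I would pass from (\ref{eq:m0n}) to (\ref{eq:m0n2}) by a Tietze elimination of the generators carrying the index $n$. The row–sum relation lets me set $B_{i,n}=-\sum_{j=1}^{n-1}B_{i,j}$ for $i\le n-1$ (with $B_{n,j}$ and $B_{n,n}$ fixed by symmetry and the diagonal relation), so these generators become redundant and the relations $\sum_{j}B_{i,j}=0$ for $i\le n-1$ are consumed. Substituting into a disjointness relation involving the index $n$, say $[B_{i,j},B_{k,n}]=0$ with $i,j,k\le n-1$ distinct, all terms $[B_{i,j},B_{k,l}]$ with $l\ne i,j,k$ vanish by disjointness and what remains is exactly the Kohno relation $[B_{i,j},B_{i,k}+B_{j,k}]=0$, while the disjointness relations among indices $\le n-1$ survive unchanged. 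Finally the substitution turns the extra relation of (\ref{eq:m0n}) into $\sum_{i<j\le n-1}B_{i,j}=0$, since the terms with $j=n$ contribute $-2\sum_{i<j\le n-1}B_{i,j}$; this strong form also absorbs the residual relation $\sum_{j}B_{n,j}=0$. Collecting the surviving and transformed relations yields exactly (\ref{eq:m0n2}).

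The hard part will be the bookkeeping in part (ii): one must check that every relation of (\ref{eq:m0n}) is either consumed by the elimination or transformed into a relation of (\ref{eq:m0n2}), and that no spurious relation appears, i.e. that the substitution is a genuine isomorphism of Lie algebras and not merely a surjection. Making this precise amounts to exhibiting the inverse homomorphism $B_{i,j}\mapsto B_{i,j}$ for $i,j\le n-1$ and verifying that (\ref{eq:m0n2}) implies the image of each relation of (\ref{eq:m0n}); reversing the interchange between the Kohno relations and the disjointness relations that involve the index $n$ is the step requiring the most care.
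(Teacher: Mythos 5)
Your proposal is correct, but there is nothing in the paper to compare it against line by line: the paper states Theorem~\ref{theo:lie_m0n} only with the remark that it was ``proved in \cite{KV}'' and gives no argument of its own. Your proof is a self-contained substitute assembled from facts the paper does record: Ihara's presentation (\ref{eq:ihara}), the splitting $P_n(S^2)\cong PM_n\times C_2$ of Theorem~\ref{theo:gil-vb}(i), and the product formula for $\Delta^2$. The computations underlying part (i) all check out: $\Delta^2$ abelianizes to $c=\sum_{i<j}B_{i,j}$; summing the relations $\sum_j B_{i,j}=0$ over $i$ gives $2c=0$; and your centrality computation is valid, so the Lie ideal generated by $c$ is just the additive span $\{0,c\}$, which is exactly the kernel $gr^*_\Gamma(C_2)$ of the projection onto $gr^*_\Gamma(PM_n)$ coming from the direct product decomposition --- hence adjoining $c=0$ to (\ref{eq:ihara}) yields (\ref{eq:m0n}). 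Your Tietze elimination in part (ii) also goes through, including the two points you flag as delicate: under $B_{i,n}\mapsto -\sum_{j\le n-1}B_{i,j}$ the relations $[B_{i,j},B_{k,n}]=0$ become the Kohno relations, and in the reverse direction the Kohno relations for indices $\le n-1$ already follow from (\ref{eq:m0n}), since there $B_{i,k}+B_{j,k}=-\sum_{l\ne i,j,k}B_{l,k}$ and each pair $\{l,k\}$ is disjoint from $\{i,j\}$; moreover $\sum_{i<j\le n-1}B_{i,j}=0$ is a consequence of (\ref{eq:m0n}), while the leftover relation $\sum_j B_{n,j}=0$ maps to twice the new sum relation and is absorbed by it. So both assignments $B_{i,j}\mapsto B_{i,j}$ are well defined and mutually inverse, and the two presentations agree. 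What your route buys is a proof independent of \cite{KV}, using only statements quoted in this paper; what it costs is that it inherits whatever lies behind those quoted inputs (Ihara's theorem and the Gillette--Van Buskirk splitting), which is presumably also the starting point of the argument in \cite{KV}.
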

\begin{Corollary}
A presentation of the Lie algebra $gr^*_\Gamma(P_n(S^2))$ can be given with generators
$A_{i,j}$ with $1 \leq i< j  \leq n-1$,  modulo 
the following  relations:
\begin{equation*}
\begin{cases}
 [A_{i,j}, A_{s,t}] = 0, \  \text{if} \ \ \{i,j\} \cap \{s,t\} = \phi, \\
 \smallskip
 [A_{i,j}, A_{i,k} + A_{j,k}] = 0  \ \ \text{for all different} \ \ i,j,k , \\
\smallskip 
2  (\sum_{i=1}^{n-2}\sum_{j=i+1}^{n-1} A_{i,j}) = 0. 
 \end{cases}
\end{equation*}
So the element $\sum_{i=1}^{n-2}\sum_{j=i+1}^{n-1} A_{i,j}$ of order 2 generates
the central sub-algebra in  $gr^*_\Gamma(P_n(S^2))$. 
\end{Corollary}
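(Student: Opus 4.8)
The plan is to prove the two assertions by separate routes: the presentation itself comes from a Tietze transformation of Ihara's presentation (\ref{eq:ihara}) of $gr^*_\Gamma(P_n(S^2))$, while the statement that $\sum_{i<j\le n-1}A_{i,j}$ is a central element of order exactly $2$ comes from the direct product splitting of Theorem~\ref{theo:gil-vb}(i). The conceptual point is that dropping the relations $\sum_{j}B_{i,j}=0$ for $i<n$ and keeping only the single consequence coming from $i=n$ is exactly what produces the factor $2$ and the torsion summand $C_2$.

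The core step is the Tietze transformation. Starting from (\ref{eq:ihara}), with generators $B_{i,j}$, $1\le i,j\le n$, I would use the relations $\sum_{j=1}^{n}B_{i,j}=0$ for $i<n$ to eliminate the generators carrying the index $n$ by setting $B_{i,n}=-\sum_{j<n}B_{i,j}$; these $n-1$ relations are consumed by the substitution. Each disjointness relation $[B_{k,n},B_{i,j}]=0$ (with $\{k,n\}\cap\{i,j\}=\emptyset$) becomes, after substitution and discarding the terms that vanish by disjointness, the relation $[B_{i,j},B_{i,k}+B_{j,k}]=0$; letting $k$ run over all admissible third indices produces exactly the $4T$ relations of (\ref{eq:m0n2}) among indices $\le n-1$, which are needed explicitly here since $\sum_{j<n}B_{i,j}=0$ is no longer available to derive them. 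The disjointness relations not involving $n$ survive unchanged, and the single remaining relation (the case $i=n$) collapses, via symmetry and the substitution, to $\sum_{j,k=1}^{n-1}B_{j,k}=0$, that is $2\sum_{i<j\le n-1}B_{i,j}=0$. Renaming $B\to A$ gives precisely the stated presentation; write $L$ for this Lie algebra, so $L\cong gr^*_\Gamma(P_n(S^2))$.

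For the order-two claim I would invoke the direct product. The associated graded functor turns direct products into direct sums: for any $G,H$ a commutator in $G\times H$ is coordinatewise, so $\Gamma_k(G\times H)=\Gamma_k(G)\times\Gamma_k(H)$ and hence $gr^*_\Gamma(G\times H)\cong gr^*_\Gamma(G)\oplus gr^*_\Gamma(H)$. Since $C_2\cong\Z/2$ is abelian, $\Gamma_2(C_2)=\{1\}$, so $gr^*_\Gamma(C_2)$ is the abelian Lie algebra $\Z/2$ in degree $1$; with Theorem~\ref{theo:gil-vb}(i) this gives
\[
gr^*_\Gamma(P_n(S^2))\;\cong\;gr^*_\Gamma({PM}_n)\oplus\Z/2 ,
\]
with the summand $\Z/2$ central in degree $1$ and generated by the class of $\Delta^2$, which equals $s:=\sum_{i<j\le n-1}A_{i,j}$.

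It remains to verify centrality and nonvanishing of $s$. Centrality follows from the $4T$ relations: rewriting $[A_{k,l},A_{k,p}+A_{l,p}]=0$ as $[A_{k,p},A_{k,l}]=-[A_{l,p},A_{k,l}]$ and grouping the surviving terms,
\[
[s,A_{k,l}]=\sum_{p\ne k,l}\bigl([A_{k,p},A_{k,l}]+[A_{l,p},A_{k,l}]\bigr)=0 ,
\]
so $s$ spans a central subalgebra, and imposing $s=0$ on $L$ returns the relations (\ref{eq:m0n2}), i.e. $gr^*_\Gamma({PM}_n)$. That $s$ has order exactly $2$ (so $2s=0$ but $s\ne0$) is read off from the displayed splitting, under which $s$ is the generator of the summand $\Z/2$ while its image in $gr^*_\Gamma({PM}_n)$ is $0$. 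The main obstacle is the index bookkeeping in the Tietze step: one must check that the elimination of the index-$n$ generators sends \emph{every} index-$n$ relation of (\ref{eq:ihara}) to a surviving disjointness or $4T$ relation and produces no extra relations, and that it is precisely the $i=n$ relation that yields the coefficient $2$ and the nonzero order-two class $s$.
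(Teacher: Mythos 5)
Your argument is correct, but note that the paper offers no proof of this Corollary at all: it is stated bare, immediately after Theorem~\ref{theo:lie_m0n}, and the intended derivation is evidently to combine the presentation (\ref{eq:m0n2}) of $gr^*_\Gamma({PM}_{n})$ with the splitting $P_n(S^2)\cong C_2\times {PM}_{n}$ of Theorem~\ref{theo:gil-vb}(i), so that $gr^*_\Gamma(P_n(S^2))\cong gr^*_\Gamma({PM}_{n})\oplus \Z/2$ and the displayed relations are those of (\ref{eq:m0n2}) with the last relation replaced by its double, the new central order-two class being the $\Z/2$ summand. Your derivation of the presentation is genuinely different: you run a Tietze elimination directly on Ihara's presentation (\ref{eq:ihara}), removing the generators $B_{i,n}$ via $B_{i,n}=-\sum_{j<n}B_{i,j}$, converting each disjointness relation involving the index $n$ into a $4T$ relation, and extracting the coefficient-$2$ relation from the single unused relation $\sum_{j}B_{n,j}=0$; this bookkeeping is complete and correct, it is self-contained (it does not need \cite{KV} for the presentation itself), and it makes transparent where the factor $2$ and the torsion come from, whereas the paper's implicit route is shorter but leans entirely on Theorem~\ref{theo:lie_m0n}. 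For the centrality and order-two claims you rejoin the paper's implicit argument (the product decomposition), and your direct check that $[s,A_{k,l}]=0$ follows from the $4T$ relations is a useful supplement. Two small points: the identification of $s=\sum_{i<j\le n-1}A_{i,j}$ with the class of $\Delta^2$, which you assert, deserves its one-line computation --- the class of $\Delta^2$ in degree one is $\sum_{i<j\le n}B_{i,j}$, which after your substitution equals $-s=s$; and $s\ne 0$ can in fact be seen without any splitting, since the degree-one component of the presented algebra is the free abelian group on the $A_{i,j}$ modulo the single relation $2s=0$, where $s$ is a primitive vector, so it survives with order exactly $2$.
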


\section{Universal Vassiliev invariants for ${M}_{n}$  and $B_n(S^2)$
}
This section is central in our work: the universal Vassiliev invariants
for the groups $P_n(S^2)$ and $\mathcal {PM}_{0,n}$ are constructed here.

We sketch briefly the basic ideas of the theory of Vassiliev invariants
for braids. For the classical braids (i.e. of a disc) it can be found, for example  in \cite{Pa}.

Let $A$ be an abelian group, then the group $V$ of all maps (non necessary 
homomorphisms) from $B_n(S^2)$ to $A$ is called the group of {\it invariants} 
of $B_n(S^2)$:
\begin{equation*} 
V=\operatorname{Map}(B_n(S^2), A).
\end{equation*}
If $A$ is a commutative ring then $V$ becomes an $A$-module. 

Let $\Z[B_n(S^2)]$ be the group ring of the group  $B_n(S^2)$, then 
\begin{equation*} 
\operatorname{Map}(B_n(S^2), A)= \operatorname{Hom}( \Z[B_n(S^2)], A).
\end{equation*}
where $\operatorname{Hom}( \Z[B_n(S^2)], A)$ is an abelian group of 
homomorphisms of the group $\Z[B_n(S^2)]$ into the  group $A$.
 
We can enlarge an invariant
$v\in V$ for singular braids using the rule 
\begin{equation*} 
v({\text{singular crossing of $i$-th and $i+1$ strands}}) =v(\sigma_i)-v(\sigma_i^{-1}).
\end{equation*}
The elements $\sigma_i-\sigma_i^{-1}\in \Z[B_n(S^2)]$, $i=1, \dots, n-1$,
 generate an ideal of the ring  $\Z[B_n(S^2)]$ which we denote by $W$;
 degrees of this ideal define a multiplicative
filtration ({\it Vassiliev filtration}) $W^m= \Phi^m( \Z[B_n(S^2)])$. An invariants $v\in V$ is called
{\it of degree} $m$ if $v(x)=0$ for all $x\in \Phi^{m+1}(\Z[B_n(S^2)])$. So the group
$V_m$ of invariants of degree $m$ is defined as
\begin{equation*} 
V_m= \operatorname{Hom}( \Z[B_n(S^2)]/\Phi^{m+1}(\Z[B_n(S^2)]) , \, A).
\end{equation*}
The advantage of braids is that this filtration can be characterized completely algebraically. Let $S$ be a map 
 from the symmetric group $\Sigma_n$: 
\begin{equation*} 
S:\Sigma_n\to B_n(S^2)
\label{eq:sect}\end{equation*}
which is a section of the canonical epimorphism
$
 B_n(S^2)\to \Sigma_n
$
(\ref{eq:br_ex_sec}). 
It is not  a homomorphism which does not exist with such a condition.
We can set up, for example, $S(s_i)=\sigma_i$.
A similar map 
\begin{equation*} 
S_M:\Sigma_n\to M_n
\label{eq:sectM}\end{equation*}
 is a section of the canonical epimorphism
$
 M_n\to \Sigma_n.
$

Let $I$ be the augmentation ideal of the  group ring $\Z[P_n(S^2)]$. The powers of $I$
generate a filtration of the ring $\Z[P_n(S^2)]$ and hence of the ring 
$\Z[P_n(S^2)]\otimes \Z[\Sigma_n]$ as we assume that elements of $\Z[\Sigma_n]$ have zero filtration.
The same filtration we have in $\Z[ {M}_{n}]$.
\begin{Proposition} There is an isomorphism of abelian groups with filtration
\begin{equation*}
\Z[ {P}_{n}(S^2)]\otimes \Z[\Sigma_n]\cong \Z[ {B}_{n}(S^2)], \\
\end{equation*}
\begin{equation*}
\Z[ {PM}_{n}]\otimes \Z[\Sigma_n]\cong \Z[ {M}_{n}], \\
\end{equation*}
which are induced by the canonical inclusions of the pure groups and the maps $S$
and $S_M$;
the rings $\Z[ {B}_{n}(S^2)]$ and  $\Z[ {M}_{n}]$ are equipped with Vassiliev filtration.
\label{Proposition:tensor_s}
\end{Proposition}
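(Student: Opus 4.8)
The plan is to first build the underlying isomorphism of abelian groups and then verify that it respects the filtrations. Using the section $S$, the short exact sequence (\ref{eq:br_ex_sec}) yields a bijection of underlying sets $P_n(S^2)\times\Sigma_n\to B_n(S^2)$, $(p,\tau)\mapsto p\,S(\tau)$, because each braid lies in a unique coset $P_n(S^2)S(\tau)$ and within it is uniquely of the form $p\,S(\tau)$. Since $\Z[P_n(S^2)]\otimes\Z[\Sigma_n]$ and $\Z[B_n(S^2)]$ are free abelian groups on the pairs $(p,\tau)$ and on the braids respectively, this bijection extends $\Z$-linearly to an isomorphism $\Phi$ of abelian groups with $\Phi(p\otimes\tau)=p\,S(\tau)$. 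Because $\Z[\Sigma_n]$ sits in filtration degree $0$, the degree-$m$ piece of the source is $I^m\otimes\Z[\Sigma_n]$, with image $\sum_\tau I^m S(\tau)$; so the entire problem reduces to proving the equality $\sum_\tau I^m S(\tau)=W^m$ for all $m$.

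The first key step is to identify $W$ with the relative augmentation ideal $J:=\ker(\epsilon\colon\Z[B_n(S^2)]\to\Z[\Sigma_n])$ of the projection in (\ref{eq:br_ex_sec}). The inclusion $W\subseteq J$ is immediate, since $\epsilon(\sigma_i-\sigma_i^{-1})=s_i-s_i^{-1}=0$ as $s_i$ is an involution. For $J\subseteq W$, I would use $J=\Z[B_n(S^2)]\cdot I$ together with $I\subseteq W$; the latter follows from the conjugation formula $a_{i,j}-1=w(\sigma_i^2-1)w^{-1}$ with $w=\sigma_{j-1}\cdots\sigma_{i+1}$, since $\sigma_i^2-1=\sigma_i(\sigma_i-\sigma_i^{-1})\in W$ and $W$ is a two-sided ideal, so every $a_{i,j}-1$, and hence every $g-1$ with $g$ pure, lies in $W$. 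Thus $W=J$ and $W^m=J^m$. Reading off the decomposition $\xi=\sum_\tau x_\tau S(\tau)$, under which $\epsilon(\xi)=\sum_\tau\epsilon_P(x_\tau)\,\tau$ with $\epsilon_P$ the augmentation of $\Z[P_n(S^2)]$, gives $J=\bigoplus_\tau I\,S(\tau)$, which is the case $m=1$.

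The general case would be handled by induction on $m$, and this is where I expect the main obstacle to lie, since it is exactly here that the normality of $P_n(S^2)$ in $B_n(S^2)$ must be exploited. Given a typical product $x\,S(\sigma)\,y\,S(\tau)$ with $x\in I$ and $y\in I^m$, conjugation by the unit $S(\sigma)$ preserves $I^m$, so $S(\sigma)\,y=y'\,S(\sigma)$ with $y'\in I^m$; writing $S(\sigma)S(\tau)=p\,S(\sigma\tau)$ with $p$ a pure braid and absorbing the unit $p$ into the ideal $I^{m+1}$ gives $x\,S(\sigma)\,y\,S(\tau)=(xy'p)\,S(\sigma\tau)\in I^{m+1}S(\sigma\tau)$, whence $J^{m+1}=J\cdot J^m\subseteq\bigoplus_\rho I^{m+1}S(\rho)$. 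The reverse inclusion is obtained by factoring $z\,S(\rho)$ with $z=\sum_k x_ky_k\in I\cdot I^m$ as $\sum_k x_k\,(y_kS(\rho))\in J\cdot J^m$. Therefore $W^m=J^m=\bigoplus_\tau I^m S(\tau)=\Phi(I^m\otimes\Z[\Sigma_n])$, so $\Phi$ is a filtered isomorphism. The assertion for $PM_n\subset M_n$ follows by the identical argument: $M_n$ is presented by the same generators $\sigma_i$, one again has $a_{i,i+1}=\sigma_i^2$ so that $I\subseteq W$ by the same conjugation identity, and $PM_n$ is normal in $M_n$ with quotient $\Sigma_n$.
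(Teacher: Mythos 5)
Your proof is correct, and it is worth pointing out that the paper offers no argument at all for this Proposition: it is stated and immediately closed with a QED box, the authors evidently regarding it as a routine transcription of the corresponding fact for classical braids in \cite{Pa}. Your write-up supplies exactly the content being taken for granted, and it isolates the two points of genuine substance. First, the identification of the Vassiliev ideal $W$ with the relative augmentation ideal $J=\ker\bigl(\Z[B_n(S^2)]\to\Z[\Sigma_n]\bigr)$: the inclusion $W\subseteq J$ is immediate, and the converse rests on $I\subseteq W$, which you obtain from $a_{i,j}-1=w(\sigma_i^2-1)w^{-1}=w\,\sigma_i(\sigma_i-\sigma_i^{-1})\,w^{-1}\in W$ together with the fact (from the presentation of \cite{GVB} quoted in Section 2) that the $a_{i,j}$ generate $P_n(S^2)$, so that the identities $gh-1=(g-1)h+(h-1)$ and $g^{-1}-1=-g^{-1}(g-1)$ propagate membership in $W$ from the generators to all of $I$. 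Second, the coset bookkeeping $J^m=\bigoplus_\tau I^m S(\tau)$, where your induction uses precisely the structural inputs available: normality of the pure subgroup, so that conjugation by $S(\sigma)$ is an augmentation-preserving ring automorphism of $\Z[P_n(S^2)]$ and hence preserves $I^m$, and the fact that the defect $S(\sigma)S(\tau)S(\sigma\tau)^{-1}$ is a pure group element, i.e.\ a unit of $\Z[P_n(S^2)]$ absorbed into the ideal $I^{m+1}$; the reverse inclusion via $I^{m+1}=I\cdot I^m$ is also handled correctly. The transfer to $M_n$ is right as well: $M_n$ is a quotient of $B_n(S^2)$ on the same generators, the conjugation identity persists in the quotient, and the images of the $a_{i,j}$ generate $PM_n$. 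The only step you cite as a black box, $J=\Z[B_n(S^2)]\cdot I$ for the kernel of a map induced by killing a normal subgroup, is in fact proved by your own $m=1$ computation $\xi=\sum_\tau x_\tau S(\tau)$, $\epsilon(\xi)=\sum_\tau\epsilon_P(x_\tau)\,\tau$, so nothing is missing.
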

\hfill $\square$


Let $c$ be the generator of the infinite cyclic group $C$  and let $\Z[C]$ be the group ring of $C$. We  denote by $C_2$ the cyclic group of the order 2 with the generator $a$, $\Z[C_2]$ is the
 group ring of $C_2$ and we define the homomorphism
\begin{equation*}
\rho: \Z[C]\to \Z[C_2],
\end{equation*}
by the formula
\begin{equation*}
\rho(c)=a.
\end{equation*}
\begin{Proposition} There are isomorphisms of rings
\begin{equation*}
\Z[ {P}_{n}]\cong \Z[ {PM}_{n+1}]\otimes \Z[C], 
\end{equation*}
\begin{equation*}
\Z[P_n(S^2)]\cong\Z[PM_n]\otimes \Z[C_2],
\end{equation*}
which can be included into the following commutative diagram
\begin{equation*}
\begin{diagram}
\Z[P_n]\cong&\Z[PM_{n+1}]\otimes \Z[C]\\
\dTo>{\rho_p}&\dTo>{{\delta}{\otimes}\rho}\\
\Z[P_n(S^2)]\cong&\Z[PM_n]\otimes \Z[C_2],\\
 \end{diagram}
\end{equation*} 
where the morphisms $\rho_p$ and $\delta$ in the diagram are induced by the corresponding morphisms of
the diagram~(\ref{eq:pmc}). 
\label{Proposition:ring_pr} 
\end{Proposition}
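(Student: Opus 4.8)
The plan is to reduce the statement to the elementary functorial fact that the group ring of a direct product factors as a tensor product of group rings. For any two groups $G$ and $H$ there is a natural ring isomorphism $\Z[G\times H]\cong \Z[G]\otimes_\Z\Z[H]$, determined on the canonical basis by $(g,h)\mapsto g\otimes h$ and extended $\Z$-linearly; it is multiplicative because the product $(g_1,h_1)(g_2,h_2)=(g_1g_2,h_1h_2)$ is sent to $(g_1g_2)\otimes(h_1h_2)=(g_1\otimes h_1)(g_2\otimes h_2)$. Since Theorem~\ref{theo:gil-vb}(ii) exhibits $P_n$ as a genuine direct product $PM_{n+1}\times C$ (and not merely a set-theoretic splitting, as was the case in Proposition~\ref{Proposition:tensor_s}), applying this fact gives the first ring isomorphism $\Z[P_n]\cong \Z[PM_{n+1}]\otimes\Z[C]$. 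The second isomorphism $\Z[P_n(S^2)]\cong\Z[PM_n]\otimes\Z[C_2]$ follows in the same way from the direct product decomposition $P_n(S^2)\cong PM_n\times C_2$ of Theorem~\ref{theo:gil-vb}(i).

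For the commutative square, the idea is to apply the covariant group ring functor $\Z[-]$ to the commuting diagram~(\ref{eq:pmc}) of Theorem~\ref{theo:gil-vb}(iv). Functoriality immediately produces a commuting square of group rings whose vertical arrows are $\Z[\rho_p]$ and $\Z[\delta\times\rho]$. It then remains to identify $\Z[\delta\times\rho]$ with $\delta\otimes\rho$ under the tensor decompositions just established. This is precisely the naturality in both variables of the isomorphism $\Z[G\times H]\cong\Z[G]\otimes\Z[H]$: for group homomorphisms $\phi\colon G\to G'$ and $\psi\colon H\to H'$ the induced map $\Z[\phi\times\psi]$ corresponds to $\Z[\phi]\otimes\Z[\psi]$, since on basis elements both send $g\otimes h$ to $\phi(g)\otimes\psi(h)$. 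Taking $\phi=\delta$ and $\psi$ to be the group homomorphism $C\to C_2$ that sends the generator $c$ to the generator $a$ yields the identification, and the ring-level commutativity is then inherited verbatim from~(\ref{eq:pmc}).

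The only point needing care --- and so the main, though mild, obstacle --- is to confirm that the arrows appearing in the target square really are the ones named in the statement. The group homomorphism $C\to C_2$, $c\mapsto a$, induces by construction exactly the ring homomorphism $\rho\colon\Z[C]\to\Z[C_2]$ defined before the statement by $\rho(c)=a$, and $\Z[\delta]$ is by definition the ring map induced by the group homomorphism $\delta$ of~(\ref{eq:pmc}); these are the maps labelled $\rho$ and $\delta$ in the tensor factors. Once this bookkeeping is done, no further computation is required.
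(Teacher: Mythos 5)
Your proof is correct and takes essentially the same route as the paper, whose entire proof is the remark that the proposition ``follows from Theorem~\ref{theo:gil-vb}''; you have simply made explicit the standard isomorphism $\Z[G\times H]\cong\Z[G]\otimes_{\Z}\Z[H]$ and its naturality, which is exactly what that one-line citation relies on.
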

\begin{proof}
It follows from Theorem~\ref{theo:gil-vb}.
\end{proof}
\begin{Proposition} The intersections of Vassiliev filtration for $\Z[ {B}_{n}(S^2)]$ 
and $\Z[ {M}_{n}]$ are trivial
\begin{equation*}
\bigcap_{m\geq 0} \Phi^m(\Z[{B}_{n}(S^2)]) = 0, \quad \quad \bigcap_{m\geq 0} \Phi^m(\Z[M_n])=0.
\end{equation*}
The 
groups  $\Phi^{m}(\Z[M_n])/\Phi^{m+1}(\Z[M_n])$ are torsion free.
The group  ${PM}_{n}$ is residually torsion free nilpotent, the group $P_n(S^2)$ is
residually  nilpotent.
\label{Proposition:res_nil}
\end{Proposition}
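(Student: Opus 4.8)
The plan is to reduce every assertion to the pure groups and then to the classical pure braid group $P_{n-1}$. First I would invoke Proposition~\ref{Proposition:tensor_s} to identify, as filtered abelian groups, $\Z[B_n(S^2)]\cong\Z[P_n(S^2)]\otimes\Z[\Sigma_n]$ and $\Z[M_n]\cong\Z[PM_n]\otimes\Z[\Sigma_n]$, with the factor $\Z[\Sigma_n]$ in filtration degree $0$ and the filtration on the pure factor given by powers of its augmentation ideal. Writing $J\subset\Z[PM_n]$ and $I\subset\Z[P_n(S^2)]$ for those augmentation ideals, I would read off $\Phi^m=J^m\otimes\Z[\Sigma_n]$ (resp. $I^m\otimes\Z[\Sigma_n]$), whence $\bigcap_m\Phi^m=(\bigcap_m J^m)\otimes\Z[\Sigma_n]$ and $\Phi^m/\Phi^{m+1}\cong(J^m/J^{m+1})\otimes\Z[\Sigma_n]$. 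Since $\Z[\Sigma_n]$ is free abelian, it then suffices to prove $\bigcap_m J^m=0$, $\bigcap_m I^m=0$, and that each $J^m/J^{m+1}$ is torsion free; the residual nilpotence statements concern the groups directly.

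The core of the work is the group $PM_n$. For $n\le 3$ it is trivial, so I would assume $n\ge 4$ and use Theorem~\ref{theo:gil-vb}(iii) to write $PM_n\cong P_{n-3}(S^2_3)$, which, as explained after diagram~(\ref{eq:3.1}), is the subgroup of $P_{n-1}$ of braids whose last two strands are unbraided; equivalently it is $\pi_1 F(\C\smallsetminus\{p,q\},\,n-3)$, the pure braid group of a twice punctured plane. The iterated Fadell--Neuwirth projections of Theorem~\ref{theorem3.1} present this configuration space as being of fiber type: an iterated semidirect product of free groups admitting sections, whose monodromy acts trivially on the homology of each free fibre. From this \emph{almost direct product} structure (Falk and Randell) I would conclude that $gr^*_\Gamma(PM_n)$ is torsion free, being additively a direct sum of free Lie algebras on the fibre generators, in agreement with the presentation of Theorem~\ref{theo:lie_m0n}. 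I would also note that, as a subgroup of the classical pure braid group $P_{n-1}$, $PM_n$ inherits the properties of being residually torsion free nilpotent and of being a residually finite $2$-group.

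From these two inputs I would deduce the group-ring statements for $PM_n$. Residual torsion free nilpotence gives $\bigcap_m J^m=0$, via the standard support argument together with the fact that finitely generated torsion free nilpotent groups have separated augmentation filtration. Torsion freeness of $gr^*_\Gamma(PM_n)$ gives, through the integral Poincar\'e--Birkhoff--Witt theorem (so that the augmentation-adic associated graded ring is $U_{\Z}(gr^*_\Gamma(PM_n))$), that each $J^m/J^{m+1}$ is torsion free. This settles both displayed statements for $M_n$.

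Finally I would transfer to the sphere, where the delicate $2$-torsion enters. By Theorem~\ref{theo:gil-vb}(i), $P_n(S^2)\cong PM_n\times C_2$ for $n\ge 3$; residual nilpotence is immediate from $\Gamma_m(PM_n\times C_2)=\Gamma_m(PM_n)\times\Gamma_m(C_2)$ and $\bigcap_m\Gamma_m(PM_n)=1$, while the group fails to be residually torsion free nilpotent because $C_2$ injects $2$-torsion into the abelianization and hence into every nilpotent quotient, which is exactly the obstruction behind Eisermann's examples. The remaining point, $\bigcap_m I^m(\Z[P_n(S^2)])=0$, is where I expect the \emph{main obstacle}. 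Using $\Z[P_n(S^2)]\cong\Z[PM_n]\otimes\Z[C_2]$ from Proposition~\ref{Proposition:ring_pr} and the relation $(a-1)^2=-2(a-1)$, I would split $I^m$ along $\Z[C_2]=\Z\cdot 1\oplus\Z(a-1)$ into $J^m$ in the first summand and $\sum_{q=1}^{m}2^{q-1}J^{m-q}$ in the second. The first intersection vanishes by $\bigcap_m J^m=0$, but the second mixes the $J$-adic filtration with growing powers of $2$, and this interference is the hard part: reducing modulo $2$ kills all but the leading term, so any element of the intersection lies in $\bigcap_m I^m(\mathbb F_2[PM_n])=0$ (residual finiteness as a $2$-group) and is therefore $2$-divisible; torsion freeness of the $J$-adic graded then lets one descend and iterate, forcing infinite $2$-divisibility in the free abelian group $\Z[PM_n]$ and hence the vanishing of the element. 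I anticipate that controlling this interaction between the cyclic $2$-torsion and the pure filtration will require the most care.
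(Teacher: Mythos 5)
Your proposal is correct, and its skeleton coincides with the paper's proof: reduce to the pure group rings via Proposition~\ref{Proposition:tensor_s}, get everything about $\Z[PM_n]$ from the embedding $PM_n\subset P_{n-1}$ of Theorem~\ref{theo:gil-vb} (residual torsion-free nilpotence, torsion-free graded quotients), and then attack $\Z[P_n(S^2)]\cong\Z[PM_n]\otimes\Z[C_2]$ through the splitting $\Z[PM_n]\oplus\Z[PM_n](a-1)$ and the relation $(a-1)^2=-2(a-1)$, which gives $I^m\cap\Z[PM_n](a-1)=\bigl(J^{m-1}+2J^{m-2}+\cdots+2^{m-1}\Z[PM_n]\bigr)(a-1)$ exactly as you wrote. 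The only genuine divergence is how the mixed $2$-adic/$J$-adic intersection is killed. The paper does it by an order-function computation: writing $v$ for the $J$-adic order and $2^k$ for the $2$-part of the gcd of the coefficients of $z$, it states $v_2(z(a-1))=v(z)+k+1$, so a nonzero $z(a-1)$ has finite filtration order and the intersection vanishes. You instead reduce mod $2$ (using that $PM_n$, being finitely generated and residually torsion-free nilpotent, is residually a finite $2$-group, so $\bigcap_m \bar J^m=0$ in $\mathbb{F}_2[PM_n]$) and then run a divisibility descent, where the step $J^m\cap 2\Z[PM_n]=2J^m$ — i.e. torsion-freeness of $gr_J$ — lets you divide by $2$ and iterate, forcing infinite $2$-divisibility and hence vanishing. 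These are two packagings of the same phenomenon and use the same two inputs ($\bigcap_m J^m=0$ and torsion-free graded); the paper's formula is sharper (it pins down the exact order of each element, which is what ultimately produces the $\Z/2^{m-i}$ summands in the later corollary), while your descent makes explicit the verification that the paper's one-line formula leaves to the reader. Two minor further differences: you get residual nilpotence of $P_n(S^2)$ directly from the lower central series of $PM_n\times C_2$ (the paper routes it through dimension subgroups, remarking that the direct argument exists), and you justify torsion-freeness of $gr^*_\Gamma(PM_n)$ via the Falk--Randell almost-direct-product structure, which the paper only invokes implicitly (via Quillen's theorem and the semi-direct product decomposition of \cite{GVB}) in the proof of the following theorem.
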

\begin{proof} Proposition~\ref{Proposition:tensor_s} reduces the question
to the $I$-adic filtration of $\Z[ {PM}_{n}]$ and $\Z[ {P}_{n}(S^2)]$.
In the case of  $\Z[ {PM}_{n}]$ the situation is the same as for the 
pure braid group of a disc,  ${PM}_{n}$ is a subgroup of $P_{n-1}$
(cf Theorem~\ref{theo:gil-vb}) and hence it 
is residually torsion free nilpotent.
For the group ring $\Z[ {P}_{n}(S^2)]$ because of its  structure as the tensor  product
(Proposition~\ref{Proposition:ring_pr}) we have an isomorphism 
\begin{equation*}
I^m(\Z[P_n(S^2)])\cong\operatorname{Im}(\oplus_{k=0}^m I^{m-k}(\Z[PM_n])\otimes I^{k}(\Z[C_2])\to \Z[P_n(S^2)]),
\end{equation*}
where $\operatorname{Im}$ denotes as usual the image of the map given in parentheses.
This group ring  can be also presented as an
abelian group as the following
direct sum
\begin{equation}
\Z[P_n(S^2)]\cong\Z[PM_n]\oplus \Z[PM_n]\{(a-1)\}
\label{eq:PS_sum}
\end{equation}
where by $a$ we denoted the generator of cyclic group $C_2$ of order 2.
For an element $z\in \Z[PM_n]$ let
\begin{equation*}
z=\sum_i n_i b_i.
\end{equation*}
be a decomposition of $z$ with respect to the base of the free abelian group  $\Z[PM_n]$.
Let  the greatest common divisor of the set of numbers $\{n_i\}$
be equal to $2^k n$ where $n$ is coprime with 2.
Let $v(z)$ be the order function 
(see \cite[chapter~III, \S~2.2]{BouCA}, for example) of the $I$-adic filtration on $\Z[PM_n]$
then the order function $v_2(z)$ of the $I$-adic filtration on the second
summand in the right hand part of formula 
(\ref{eq:PS_sum}) is given as follows
\begin{equation*}
v_2(z(a-1))=v(z)+k+1.
\end{equation*}
The intersection $\bigcap_{k=0}^\infty  I^{k}(\Z[C_2])$ is trivial, the group 
$\Z[ {PM}_{n}]$ is residually torsion free nilpotent, so
\begin{equation*}
\bigcap_{k=0}^\infty  I^{k}(\Z[P_n(S^2)])=0.
\end{equation*}
We remind that integral dimension subgroups (for the group $P_n(S^2)$)
are defined by the formula
$$D_m(P_n(S^2))=P_n(S^2)\cap (1+I^m),$$
where the intersection is taken in $\Z[P_n(S^2)]$.
Hence the  intersection of 
$D_m(P_n(S^2)$
is trivial  the group $P_n(S^2)$
is residually  nilpotent (of course not residually torsion free nilpotent);
the last fact can be also seen directly.
\end{proof}

The filtered algebra $\mathcal{P} _n$ is defined as the universal enveloping 
algebra of the Lie algebra $gr^*_\Gamma(P_n)$ for the standard pure braid group
\begin{equation*}
\mathcal{P}_n= U(gr^*_\Gamma(P_n)).
\end{equation*}
Its completion
$\widehat{\mathcal{P}_n}$ is the target of the universal Vassiliev invariant for the pure 
braids \cite{Pa}
\begin{equation*}
\mu:\Z[ {P}_{n}]\to  \widehat{ {\mathcal{P}}_{n}}. 
\end{equation*}
Let $\mathcal{PM}_{n}$ be the universal enveloping algebra of the Lie algebra  
$gr^*_\Gamma( {PM}_{n})$; so as an associative algebra it has the generators which are
in one-to-one correspondence with the generators $B_{i,j}$ of 
$gr^*_\Gamma( {PM}_{n})$,
say it will be $x_{i,j}$, $ 1 \leq i ,j \leq n$, which satisfy the associative form of relations (\ref{eq:m0n2}). 
Also we denote by $\mathcal{P}_n(S^2)$ the universal enveloping algebra of the Lie algebra $gr^*_\Gamma( {P}_{n}(S^2))$.

As usual one can define a Hausdorff filtration (intersection is zero) on $\mathcal{PM}_{n}$ and on $\mathcal{P}_{n}(S^2)$ by giving a degree 1 to each generator $x_{i,j}$. 
The canonical epimorphism of groups $\rho_p:P_n\to P_n(S^2)$ induces an epimorphism
of filtered algebras
\begin{equation*}
\rho_a:\mathcal{P} _n\to \mathcal{P}_{n}(S^2).
\end{equation*}
 We denote by $\widehat {\mathcal{PM}}_{n}$  the completion 
 of $\mathcal{PM}_{n}$  with respect to the
topology, defined by this filtration. The same way
$\widehat{\mathcal{P}_{n}}(S^2)$ is the completion 
 of $\mathcal{P}_{n}(S^2)$.
The algebra $\widehat {\mathcal{PM}}_{n}$ can be also described as an algebra of non-commutative power series of $x_{i,j}$
factorized by the closed ideal generated by the left hand sides of relations
(\ref{eq:m0n2}). 

Let  $\widehat{\mathcal{A}}$ be an associative algebra with unit 
such that as an 
abelian group it is isomorphic to the direct sum of integers and 2-adic numbers $\Z\oplus\Z_2$. We denote the generator of the first summand by 1 and the generator of the second summand by $x$.  The multiplication in  $\widehat{\mathcal{A}}$
is given by the rule
\begin{equation*}
x^2=-2x.
\end{equation*}
This algebra is filtered as follows $\Phi^0=\widehat{\mathcal{A}}$, $\Phi^1 =\Z_2$, $\Phi^m$
is generated by $2^mx$, for $m=2,3 \dots$. 

We define the homomorphisms 
\begin{equation*}
\alpha: \Z[C_2)]\to \widehat{\mathcal{A}},
\end{equation*}
\begin{equation*}
\chi: \Z[C]\to\Z[[y]], 
\end{equation*}
\begin{equation*}
\beta: \Z[[y]]\to\widehat{ \mathcal{A}}
\end{equation*}
by the formulae
\begin{equation}
 \alpha(a)= 1+x, \quad 
\chi(c)= 1+y 
, \quad
\beta(y)=x.
\label{eq:acy}
\end{equation}

\begin{Proposition} The homomorphisms of rings $\alpha$ and $\chi$
respect the filtration and induce a multiplicative isomorphism 
at the associated graded level.
They fit in the following commutative diagram of homomorphisms of rings.
\begin{equation*}
\begin{diagram}
\Z[C]&\rTo{\rho} &  \Z[C_2] \\
\dTo>{\chi}&&\dTo>{\alpha}\\
\Z[[y]]&\rTo{\beta} & \widehat{\mathcal{A}}.\\
\end{diagram}\label{eq:diag2} 
\end{equation*}
\end{Proposition}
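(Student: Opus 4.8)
The plan is to treat the statements about $\chi$ and $\alpha$ separately, and to reduce commutativity of the square to a single computation on the generator $c$. For commutativity I would note that all four arrows are unital ring homomorphisms and $\Z[C]$ is generated as a ring by the invertible element $c$, so it suffices to check $\alpha\rho(c)=\beta\chi(c)$; both sides equal $1+x$ since $\alpha(a)=1+x$ and $\beta(1+y)=1+\beta(y)=1+x$. The instructive consistency check is on $c^{-1}$: here $\rho(c^{-1})=a^{-1}=a$, so $\alpha\rho(c^{-1})=1+x$, whereas $\beta\chi(c^{-1})=\beta\bigl((1+y)^{-1}\bigr)=\sum_{k\ge0}(-1)^k x^k$. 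Using $x^2=-2x$, hence $x^k=(-2)^{k-1}x$ for $k\ge1$, the coefficient of $x$ is $\sum_{k\ge1}(-1)^k(-2)^{k-1}$, whose partial sums $-(2^m-1)$ converge $2$-adically to $1$, so the whole expression collapses to $1+x$ as well. This already signals that the target must be completed $2$-adically for the square to be consistent.

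For $\chi$ the argument is routine. I would put the $I$-adic filtration on $\Z[C]$ and the $(y)$-adic one on $\Z[[y]]$; since $C$ is infinite cyclic, $gr^*$ of $\Z[C]$ is the polynomial ring on the degree-one class of $c-1$, and the identity $\chi\bigl((c-1)^m\bigr)=y^m$ shows both that $\chi(I^m)\subseteq(y)^m$ and that the induced graded map sends $\overline{(c-1)^m}$ to $\overline{y^m}$, an isomorphism $\Z\xrightarrow{\sim}\Z$ in each degree. A graded ring homomorphism that is bijective in every degree is a multiplicative isomorphism.

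The substance lies with $\alpha$. First I would record well-definedness, $(1+x)^2=1+2x+x^2=1+2x-2x=1=\alpha(a^2)$. For the $I$-adic filtration on $\Z[C_2]$ the relation $(a-1)^2=-2(a-1)$ gives $(a-1)^m=(-2)^{m-1}(a-1)$, so $I^m=2^{m-1}\Z(a-1)$ and $gr^m$ of $\Z[C_2]$ is $\Z$ for $m=0$ and $\Z/2$ for $m\ge1$, generated by $\overline{(a-1)^m}$. On the target the same identity $x^m=(-2)^{m-1}x$ shows that the filtration steps $\Phi^m$ contain $x^m$ and that $\Phi^m/\Phi^{m+1}\cong\Z/2$ is generated by the class of $x^m$. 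Since $\alpha\bigl((a-1)^m\bigr)=x^m$, I conclude that $\alpha(I^m)\subseteq\Phi^m$ and that $\alpha$ carries the generator of $gr^m$ of $\Z[C_2]$ to the generator of $gr^m$ of $\widehat{\mathcal A}$, giving a degreewise, hence multiplicative, isomorphism on associated graded objects.

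The hard part will be the bookkeeping of the powers of $2$: one must pin down exactly which step of $\Phi^\bullet$ contains $x^m=(-2)^{m-1}x$ and verify that it represents the nontrivial class of $gr^m\cong\Z/2$, so that the filtration of $\widehat{\mathcal A}$ is forced to match the $I$-adic filtration of $\Z[C_2]$ under $\alpha$ with no index drift. This is precisely where the passage from $\Z$ to $\Z_2$ is essential: it is what makes each graded piece stabilize to $\Z/2$ rather than an unbounded $2$-group, and what lets the $2$-adic series above converge, so that $\beta$ is well defined and the whole diagram is consistent.
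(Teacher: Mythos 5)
Your proof is correct and takes essentially the same route as the paper: the paper's entire proof consists of the single verification $(a-1)^2=a^2-2a+1=2-2a=-2(a-1)$, which is exactly the identity your argument pivots on (in the equivalent forms $(1+x)^2=1$, $(a-1)^m=(-2)^{m-1}(a-1)$ and $x^m=(-2)^{m-1}x$). The rest of your write-up --- commutativity checked on the generator $c$, the polynomial-ring description of $gr^*\Z[C]$, and the identification $\Phi^m=2^{m-1}\Z_2\,x$ so that $gr^m\cong\Z/2$ on both sides with generator mapping to generator --- is precisely the routine detail the paper leaves implicit.
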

\begin{proof} 
It suffices to verify that
$$
(a-1)^2= a^2-2a+1= 2-2a=-2(a-1).
$$
\end{proof}
\begin{Proposition} There are isomorphisms of filtered rings
\begin{equation*}
\widehat{ {\mathcal{P} }}_{n}\cong \widehat{\mathcal {PM}}_{n+1}\widehat{\otimes}\Z[[y]], 
\end{equation*}
\begin{equation*}
\widehat{\mathcal {P}}_{n}(S^2)\cong\widehat{\mathcal {PM}}_{n}\widehat{\otimes} \widehat{\mathcal{A}},
\end{equation*}
which can be included into the following commutative diagram of filtered ring homomorphisms
\begin{equation*}
\begin{diagram}
\widehat{\mathcal{P} }_n\cong&\widehat{\mathcal{PM}}_{n+1}\widehat{\otimes}\Z[[y]]\\
\dTo>{\widehat{\rho_a}}&\dTo>{\widehat{\delta}\widehat{\otimes}\beta}\\
\widehat{\mathcal {P}}_{n}(S^2)\cong&\widehat{\mathcal {PM}}_{n}\widehat{\otimes} \widehat{\mathcal{A}},\\
 \end{diagram} \label{eq:diag0}
\end{equation*}
where the morphisms $\widehat{\rho_a}$ and $\widehat{\delta}$ in the  diagram are induced by the corresponding morphisms of
the diagram~(\ref{eq:pmc}). 
\end{Proposition}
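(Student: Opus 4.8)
The plan is to obtain both horizontal isomorphisms by completing the ring isomorphisms of Proposition~\ref{Proposition:ring_pr} and then reading off the three tensor factors through the filtered maps $\alpha,\chi,\beta$ already set up. Conceptually these isomorphisms come from Theorem~\ref{theo:gil-vb}: a direct product of groups splits its descending central series, $\Gamma_i(G\times H)=\Gamma_i(G)\times\Gamma_i(H)$, so $gr^*_\Gamma(G\times H)\cong gr^*_\Gamma(G)\oplus gr^*_\Gamma(H)$, and since the universal enveloping functor sends direct sums of Lie algebras to tensor products, $U(\mathfrak g\oplus\mathfrak h)\cong U(\mathfrak g)\otimes U(\mathfrak h)$. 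Applied to $P_n\cong PM_{n+1}\times C$ and $P_n(S^2)\cong PM_n\times C_2$ this gives, before completion, $\mathcal P_n\cong\mathcal{PM}_{n+1}\otimes U(gr^*_\Gamma(C))$ and $\mathcal P_n(S^2)\cong\mathcal{PM}_n\otimes U(gr^*_\Gamma(C_2))$, where $gr^*_\Gamma(C)=\Z\cdot y$ is the free abelian Lie algebra in degree $1$ and $gr^*_\Gamma(C_2)=\Z/2$ sits in degree $1$. So the whole statement is the transport of the splitting of Theorem~\ref{theo:gil-vb} through the functors $gr^*_\Gamma$, $U$, and completion.

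I would then identify the three completed factors. For the $PM$-factor the equality $\widehat{\Z[PM_n]}\cong\widehat{\mathcal{PM}}_n$ holds because $PM_n$ is residually torsion free nilpotent with torsion free graded quotients (Proposition~\ref{Proposition:res_nil}), so the canonical comparison map $U(gr^*_\Gamma(PM_n))\to gr^*_I(\Z[PM_n])$ is an isomorphism over $\Z$ and survives completion; the same applies to the disc factor. The infinite cyclic factor is immediate, $\widehat{U(\Z\cdot y)}=\Z[[y]]$ via $\chi$, giving the top isomorphism $\widehat{\mathcal P}_n\cong\widehat{\mathcal{PM}}_{n+1}\widehat\otimes\Z[[y]]$ with no arithmetic subtlety, every graded piece of $gr^*_\Gamma(P_n)$ being a free $\Z$-module. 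For the order-two factor I would invoke the Proposition relating $\alpha,\chi,\beta$: the recorded identity $(a-1)^2=-2(a-1)$ shows that $\alpha\colon\Z[C_2]\to\widehat{\mathcal A}=\Z\oplus\Z_2$ respects the filtration and is an isomorphism on associated graded, so it realizes the completed order-two factor as $\widehat{\mathcal A}$; combined with the ring decomposition $\Z[P_n(S^2)]\cong\Z[PM_n]\otimes\Z[C_2]$ this yields the bottom isomorphism $\widehat{\mathcal P}_n(S^2)\cong\widehat{\mathcal{PM}}_n\widehat\otimes\widehat{\mathcal A}$.

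Commutativity of the square should then be pure functoriality. Under the two horizontal isomorphisms the epimorphism $\rho_p\colon P_n\to P_n(S^2)$ corresponds, by diagram~\eqref{eq:pmc}, to $\delta\times\rho\colon PM_{n+1}\times C\to PM_n\times C_2$; applying $gr^*_\Gamma$, $U$, and completion turns $\delta$ into $\widehat\delta$ on the $PM$-factor, while the already established square~\eqref{eq:diag2}, namely $\alpha\circ\rho=\beta\circ\chi$, says exactly that $\rho$ becomes $\beta\colon\Z[[y]]\to\widehat{\mathcal A}$ on the cyclic factor. Hence $\widehat{\rho_a}$ is carried to $\widehat\delta\widehat\otimes\beta$ and the diagram commutes. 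The one genuinely delicate point, and the place where care is needed, is the sphere factor: because $gr^*_\Gamma(C_2)$ carries $2$-torsion, one must complete $\Z[C_2]$ \emph{$I$-adically}, obtaining $\Z\oplus\Z_2$ and detecting the $2$-torsion, rather than completing its associated graded ring, which would only produce a purely $2$-torsion group; it is precisely the introduction of $\widehat{\mathcal A}$ and the map $\alpha$ that makes the integral statement correct, and reconciling this with the enveloping-algebra description of $\widehat{\mathcal P}_n(S^2)$ is the main obstacle. Everything else is routine.
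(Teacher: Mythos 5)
Your skeleton---direct product of groups, hence direct sum of associated graded Lie algebras, hence tensor product of enveloping algebras, hence the claimed isomorphisms after completion---is exactly the paper's proof, which consists of the single sentence that the statements follow from facts about direct products of Lie algebras similar to Theorem~\ref{theo:gil-vb}. The top isomorphism and the commutativity of the square are indeed routine along these lines, for the reasons you give.

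The problem is the sphere factor, which you flag as ``the main obstacle'' but do not close; and it cannot be closed in the form you attempt, because with the paper's literal definition of $\widehat{\mathcal P}_n(S^2)$ the two sides of the second isomorphism are not isomorphic rings at all. Run your own computation to the end: $gr^*_\Gamma(C_2)=\Z/2$ in degree one, so $U(gr^*_\Gamma(C_2))=\Z[t]/(2t)$ and $\mathcal P_n(S^2)\cong\mathcal{PM}_n\otimes\Z[t]/(2t)$; completing in the degree filtration (the paper's stated definition of $\widehat{\mathcal P}_n(S^2)$) preserves the relation $2t=0$, so this completion contains the nonzero $2$-torsion elements $1\otimes t^m$ for every $m$. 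By contrast $\widehat{\mathcal{PM}}_n\widehat{\otimes}\widehat{\mathcal A}\cong\widehat{\mathcal{PM}}_n\oplus(\widehat{\mathcal{PM}}_n\widehat{\otimes}\Z_2)$ is torsion free: in the tower of finite quotients defining the completed tensor product, any $2$-torsion element at stage $m$ has $\Z_2$-components divisible by $2^{m-i-1}$ in filtration $i$, hence maps to zero at stage $m-1$, so the inverse limit has no torsion. Thus the enveloping-algebra route proves $\widehat{\mathcal P}_n(S^2)\cong\widehat{\mathcal{PM}}_n\widehat{\otimes}\bigl(\Z\oplus\prod_{m\geq 1}\Z/2\,t^m\bigr)$, which is a different statement; the isomorphism with $\widehat{\mathcal A}$ holds only at the level of associated graded rings (where $(a-1)^2=-2(a-1)$ gives $gr\,\widehat{\mathcal A}\cong\Z[t]/(2t)$), or else one must re-read $\widehat{\mathcal P}_n(S^2)$ as the $I$-adic completion of the group ring $\Z[P_n(S^2)]$---equivalently take the right-hand side as the \emph{definition} of $\widehat{\mathcal P}_n(S^2)$, which is how the paper in effect uses the Proposition later, in diagram~(\ref{eq:kala}). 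You deserve credit for detecting precisely this tension, which the paper's one-line proof passes over in silence; but a proof must resolve it by one of these re-readings, and yours stops short. A second, related flaw: the principle you invoke for the $PM$-factor, that a graded isomorphism ``survives completion,'' is false in general---the pair $\Z[C_2]$ versus $\Z[t]/(2t)$ is itself the counterexample---so the identification $\widehat{\Z[PM_n]}\cong\widehat{\mathcal{PM}}_n$ requires a filtration-preserving map inducing the Quillen isomorphism (this is the invariant $\kappa$, which the paper constructs only after this Proposition, following Papadima) or an argument from the freeness of the graded quotients; the conclusion is true for that torsion-free factor, but not for the reason you state, and the sphere factor shows exactly why the reason matters.
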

\begin{proof}
The statements  follow from the facts 
about the direct product of the Lie algebras  similar to Theorem~\ref{theo:gil-vb}.
\end{proof}

The map 
\begin{equation*}
\kappa: \Z[ {PM}_{n}]\to \widehat{\mathcal {PM}}_{n}
\end{equation*}
can be defined following the same steps as the definition of the universal
Vassiliev invariant in \cite{Pa}.
However it is more simple to use the universal invariant from \cite[Theorem~1.1]{Pa}
and define $\kappa$ 
as  the following composition
\begin{equation*}
\Z[PM_{n+1}]\to\Z[ {P}_{n}]\stackrel{\mu}\rightarrow \widehat{ {\mathcal{P}}_{n}}
\to \widehat{ {\mathcal{PM}}}_{n+1},
\end{equation*}
where the first map is the canonical inclusion and the last one is the canonical projection.
We can also reason inversely: at first construct $\kappa$, then define the map
$\widehat{\kappa\otimes\chi}$ as the composition
\begin{equation*}
\Z[PM_{n+1}]\otimes \Z[C] \stackrel{\kappa\otimes\chi}\longrightarrow
\widehat{ {\mathcal{PM}}_{n+1}}{\otimes}\Z[[y]]\to \widehat{ {\mathcal{PM}}_{n+1}}\widehat{\otimes}\Z[[y]],
\end{equation*}
where the last map is the completion, 
and then define
$\mu$ using the following diagram  
\begin{equation}
\begin{diagram}
\Z[P_n]\cong&\Z[PM_{n+1}]\otimes \Z[C]\\
\dTo>{\mu}&\dTo>{\widehat{\kappa\otimes\chi}}\\
\widehat{\mathcal{P} }_n\cong&\widehat{\mathcal{PM}}_{n+1}\widehat{\otimes}\Z[[y]].\\
 \end{diagram} \label{eq:diag1}
\end{equation}
The map $\mu$ defined by (\ref{eq:diag1}) is a universal Vassiliev invariant for the 
classical braids, though it may not coincide with the map constructed in \cite{Pa}
which is not  unique.  
\begin{Theorem} The map 
\begin{equation*}
\kappa: \Z[ {PM}_{n}]\to \widehat{\mathcal {PM}}_{n}
\end{equation*}
respects the filtration and induces a multiplicative isomorphism 
at the associated graded level.
\end{Theorem}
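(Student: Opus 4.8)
The plan is to treat the two assertions separately: the filtration compatibility by a formal composition argument, and the graded statement by reducing everything to the single nontrivial input, Papadima's theorem for $\mu$. Recall first the structure of the map: by definition $\kappa=\pi\circ\mu\circ\iota$, where $\iota\colon\Z[PM_n]\hookrightarrow\Z[P_{n-1}]$ is the inclusion coming from the splitting $P_{n-1}\cong PM_n\times C$ of Theorem~\ref{theo:gil-vb}(ii), and $\pi\colon\widehat{\mathcal{P}}_{n-1}\to\widehat{\mathcal{PM}}_n$ is the projection induced by $\widehat{\mathcal{P}}_{n-1}\cong\widehat{\mathcal{PM}}_n\,\widehat{\otimes}\,\Z[[y]]$ (setting $y=0$). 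Each of these three maps respects the relevant filtration: since $PM_n$ is a direct factor of $P_{n-1}$ one has $\iota\big(I^m(\Z[PM_n])\big)\subseteq I^m(\Z[P_{n-1}])$ via the tensor description $I^m(\Z[P_{n-1}])=\sum_{p+q=m}I^p(\Z[PM_n])\otimes I^q(\Z[C])$ (take $q=0$); $\mu$ respects the Vassiliev $=$ $I$-adic filtration by \cite[Theorem~1.1]{Pa}; and $\pi$ is a projection of filtered algebras. Hence $\kappa$ respects the filtration, which settles the first assertion.

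For the graded statement I would work with the canonical natural transformation $\theta_G\colon U(gr^*_\Gamma(G))\to\bigoplus_m I^m/I^{m+1}=:\operatorname{gr}\Z[G]$, which is an algebra homomorphism, natural in $G$, and which in degree $1$ is the Magnus isomorphism $G_{\mathrm{ab}}\cong I/I^2$. Since $\operatorname{gr}\Z[G]$ is generated in degree $1$, the map $\theta_G$ is \emph{always surjective}. Papadima's theorem asserts that $\operatorname{gr}\mu$ is a multiplicative isomorphism $\operatorname{gr}\Z[P_{n-1}]\xrightarrow{\ \cong\ }\mathcal{P}_{n-1}=U(gr^*_\Gamma(P_{n-1}))$; as it realizes the canonical identification $I/I^2\cong gr^1_\Gamma$ in degree one and both sides are generated in degree $1$, multiplicativity forces $\operatorname{gr}\mu=\theta_{P_{n-1}}^{-1}$.

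The heart of the proof is then a one-line naturality computation. Applying $\theta$ to the inclusion $i\colon PM_n\to P_{n-1}$ yields $\operatorname{gr}(\iota)\circ\theta_{PM_n}=\theta_{P_{n-1}}\circ U(\operatorname{gr} i)$; and because the induced Lie-algebra map is the inclusion of the direct summand $gr^*_\Gamma(PM_n)\hookrightarrow gr^*_\Gamma(PM_n)\oplus\Z y=gr^*_\Gamma(P_{n-1})$, the map $U(\operatorname{gr} i)$ sends $w\mapsto w\otimes 1$. Therefore, for $w\in\mathcal{PM}_n$,
\[
\operatorname{gr}(\kappa)\big(\theta_{PM_n}(w)\big)=\operatorname{gr}(\pi)\,\theta_{P_{n-1}}^{-1}\,\operatorname{gr}(\iota)\,\theta_{PM_n}(w)=\operatorname{gr}(\pi)(w\otimes 1)=w ,
\]
the last equality being the specialization $y=0$. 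Thus $\operatorname{gr}(\kappa)\circ\theta_{PM_n}=\mathrm{id}$, so $\theta_{PM_n}$ is a split monomorphism; combined with the automatic surjectivity above, $\theta_{PM_n}$ is an isomorphism and $\operatorname{gr}(\kappa)=\theta_{PM_n}^{-1}$ is a multiplicative isomorphism, as required.

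The step I expect to be the real obstacle is the bookkeeping that legitimizes this computation: one must check that $\operatorname{gr}\mu$ is genuinely the canonical map $\theta_{P_{n-1}}^{-1}$ (not merely some multiplicative isomorphism) and that $\theta$ is natural for the inclusion $i$. Everything else is formal, and the key economy is the remark that $\theta_{PM_n}$ is automatically onto, which lets me conclude from a one-sided inverse alone. This sidesteps the torsion-freeness/flatness lemma $\operatorname{gr}\Z[P_{n-1}]\cong\operatorname{gr}\Z[PM_n]\otimes\Z[y]$ that the alternative route—factoring $\operatorname{gr}\mu=\operatorname{gr}\kappa\otimes\operatorname{gr}\chi$ and invoking faithful flatness of $\Z[y]$—would otherwise demand.
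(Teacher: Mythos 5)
Your proof is correct, and it is a genuinely worked-out variant rather than a transcription of what the paper does. The paper offers two sketches: the first reruns Papadima's argument for $PM_n$ itself, using the fact from \cite{GVB} that $PM_n$ is an iterated semi-direct product of free groups, so that $\mathcal{PM}_n$ is torsion free and the Quillen map \cite{Q} $\theta_{PM_n}\colon \mathcal{PM}_{n}\to gr^{*}_{I}\,\Z[PM_n]$ is an isomorphism with $gr^{*}(\kappa)$ as its inverse; the second invokes the splitting of Theorem~\ref{theo:gil-vb}, Proposition~\ref{Proposition:ring_pr} and diagram~(\ref{eq:diag1}), i.e.\ in effect the factorization $gr(\mu)=gr(\kappa)\otimes gr(\chi)$, which, if written out, needs the identification $gr^{*}_{I}(\Z[PM_{n+1}]\otimes\Z[C])\cong gr^{*}_{I}\Z[PM_{n+1}]\otimes\Z[y]$ plus a degree-by-degree (faithful flatness) argument over $\Z$ --- exactly the bookkeeping you flag at the end. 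Your route takes the same inputs as the second sketch (the splitting, and Papadima's theorem as a black box) but replaces the tensor factorization by naturality of $\theta$ together with its automatic surjectivity, producing the one-sided inverse $gr(\kappa)\circ\theta_{PM_n}=\mathrm{id}$ and hence the isomorphism; this avoids all torsion-freeness and flatness input, and as a by-product yields precisely the assertion of the paper's first sketch ($gr(\kappa)=\theta_{PM_n}^{-1}$, i.e.\ the Quillen map for $PM_n$ is an isomorphism) without redoing Papadima's induction over the semi-direct product structure. The price is the one you identify: you must know that $gr(\mu)$ is exactly $\theta_{P_{n-1}}^{-1}$ and not merely some multiplicative isomorphism. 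This is not a gap, only a point to cite carefully: Papadima's invariant is multiplicative and satisfies $\mu(a_{i,j})=1+A_{i,j}+(\text{terms of degree}\ge 2)$, so the degree-one part of $gr(\mu)$ is the inverse of the Magnus isomorphism $g\mapsto g-1$ on all of $I/I^2$ (both maps being additive and agreeing on the generating classes $a_{i,j}-1$), and since both graded algebras are generated in degree one, multiplicativity then forces $gr(\mu)\circ\theta_{P_{n-1}}=\mathrm{id}$, as your argument requires.
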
 
\begin{proof}
Two proofs  can be done. The first one is to follow the steps 
of the proof of the  same fact for the classical pure braid groups \cite{Pa}; this is
possible because the group  ${PM}_{n}$ has the similar structure as $P_n$: it is an iterated semi-direct product of free groups \cite{GVB}. In particular the algebra 
${\mathcal {PM}}_{n}$ has no torsion and the Quillen map \cite{Q}
\begin{equation}
{\mathcal {PM}}_{n} \to gr^*_I \Z[PM_n]
\end{equation}
to the graded object associated to the $I$-adic filtration of the ring $\Z[PM_n]$
becomes an isomorphism with $gr^*(\kappa)$ as its inverse.
Another proof is to apply the fact that $P_n$
is the product of ${PM}_{n}$ with the infinite cyclic group generated by the
center of $P_n$, and then use Proposition \ref{Proposition:ring_pr} and diagram 
(\ref{eq:diag1}).  
\end{proof}

We define the map 
\begin{equation*}
\lambda: \Z[P_n(S^2)]\to \widehat{\mathcal{P}_n}(S^2)
\end{equation*}
using the following diagram 
\begin{equation}
\begin{diagram}
 \Z[PM_n]\otimes \Z[C_2]&\cong\Z[P_n(S^2)] \\
\dTo>{\widehat{\kappa\otimes\alpha}}&\dTo>{\lambda} \\ \widehat{\mathcal{PM}}_n\widehat{\otimes}\widehat{\mathcal{A}}&\cong\widehat{\mathcal{P}_n}(S^2).\\
\end{diagram}\label{eq:kala} 
\end{equation}

\begin{Theorem} The map 
\begin{equation*}
\lambda: \Z[P_n(S^2)]\to \widehat{\mathcal{P}_n}(S^2)
\end{equation*}
respects the filtration, induces a multiplicative isomorphism 
at the associated graded level and fits in the following diagram of filtered rings
\begin{equation}
\begin{diagram}
\Z[P_n]&\rTo{\rho_p} & \Z[P_n(S^2)] \\
\dTo>{\mu}&&\dTo>{\lambda} \\
\widehat{\mathcal{P} }_n &\rTo{\widehat{\rho_a}} & \widehat{\mathcal{P}_n}(S^2).\\
\end{diagram}\label{eq:diag2.5} 
\end{equation}
\label{Theorem:un_vas_pu} 
\end{Theorem}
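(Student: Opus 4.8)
The plan is to read everything through the two tensor decompositions. By its definition in the square~(\ref{eq:kala}), $\lambda$ is exactly the completed tensor map $\widehat{\kappa\otimes\alpha}$ transported through the ring isomorphisms $\Z[P_n(S^2)]\cong\Z[PM_n]\otimes\Z[C_2]$ and $\widehat{\mathcal{P}_n}(S^2)\cong\widehat{\mathcal{PM}}_n\widehat{\otimes}\widehat{\mathcal{A}}$. These isomorphisms are filtered: the first identifies the Vassiliev ($I$-adic) filtration with the tensor-product filtration, as computed in the proof of Proposition~\ref{Proposition:res_nil}, and the second is filtered by construction. Hence the assertion that $\lambda$ respects the filtration reduces to the same property for $\kappa\otimes\alpha$, which holds because $\kappa$ respects its filtration (the Theorem on $\kappa$) and $\alpha$ respects its filtration (the Proposition on $\alpha$ and $\chi$). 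Passing to the associated graded, the only delicate point is that $gr$ commutes with the (completed) tensor product, i.e. that the canonical surjections $\bigoplus_{i+j=m}gr^i\otimes gr^j\to gr^m(-\widehat{\otimes}-)$ are isomorphisms; this holds because the relevant graded objects $\mathcal{PM}_n\cong gr_I\Z[PM_n]$ (from the Theorem on $\kappa$) are torsion free over $\Z$, so tensoring with them is exact. Consequently $gr(\lambda)=gr(\kappa)\otimes gr(\alpha)$ is a tensor product of multiplicative isomorphisms, hence itself a multiplicative isomorphism.

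It remains to prove that (\ref{eq:diag2.5}) commutes, and for this I would realise it as the front face of a commutative cube whose back face lives in the tensor-decomposed world. The four side faces are already at hand: the left face is the defining square~(\ref{eq:diag1}) of $\mu$, the right face is the defining square~(\ref{eq:kala}) of $\lambda$, the top face is the square of Proposition~\ref{Proposition:ring_pr} presenting $\rho_p$ as $\delta\otimes\rho$, and the bottom face is the square~(\ref{eq:diag0}) presenting $\widehat{\rho_a}$ as $\widehat{\delta}\widehat{\otimes}\beta$. The four vertical edges of the cube are the tensor-decomposition isomorphisms, so they are invertible; a routine diagram chase then shows that the front face~(\ref{eq:diag2.5}) commutes as soon as the back face does, i.e. once one knows
\[
(\widehat{\delta}\widehat{\otimes}\beta)\circ\widehat{\kappa\otimes\chi}=\widehat{\kappa\otimes\alpha}\circ(\delta\otimes\rho).
\]

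Both composites in this last identity are themselves tensor products of maps, namely $(\widehat{\delta}\circ\kappa)\widehat{\otimes}(\beta\circ\chi)$ and $(\kappa\circ\delta)\widehat{\otimes}(\alpha\circ\rho)$, so it suffices to establish the two factorwise identities separately. On the central factor the needed equality is $\beta\circ\chi=\alpha\circ\rho$, which is precisely the commutative square relating $\alpha,\chi,\beta,\rho$ from the Proposition on $\alpha$ and $\chi$ (explicitly $\alpha\rho(c)=1+x=\beta\chi(c)$). The remaining factor asks for the naturality of $\kappa$ with respect to the point-forgetting homomorphism $\delta\colon PM_{n+1}\to PM_n$, that is $\widehat{\delta}\circ\kappa=\kappa\circ\delta$, and this is the step I expect to be the main obstacle. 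Because $\mu$ and $\kappa$ are deliberately \emph{not} multiplicative, this identity cannot be reduced to a check on generators, nor recovered from the (easy) agreement of the induced maps on $gr$, since a graded identity would not lift. Instead I would unwind the definition of $\kappa$ as the composite $\Z[PM]\hookrightarrow\Z[P]\xrightarrow{\mu}\widehat{\mathcal{P}}\to\widehat{\mathcal{PM}}$ and derive the naturality of $\kappa$ from the functoriality of Papadima's universal invariant $\mu$ \cite{Pa} under the strand-forgetting maps of the classical pure braid groups, after checking that $\delta$ is covered by such a map compatibly with the canonical inclusions and projections. Granting this, the back face commutes factorwise, hence so does (\ref{eq:diag2.5}), completing the proof.
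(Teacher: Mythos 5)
Your proof is correct and is essentially the paper's own argument: the paper proves the theorem by continuing diagram~(\ref{eq:diag1}) with the middle square whose horizontal maps are $\delta\otimes\rho$ and $\widehat{\delta}\widehat{\otimes}\beta$, followed by the defining square~(\ref{eq:kala}) of $\lambda$, and then observing that the outer frame of the concatenated diagram is~(\ref{eq:diag2.5}) --- exactly your cube, with the filtration/graded-level claims for $\lambda$ likewise deduced from those for $\kappa$ and $\alpha$ through the tensor decomposition. The one step you single out as the main obstacle, the naturality $\widehat{\delta}\circ\kappa=\kappa\circ\delta$ needed for that middle square, is passed over in silence in the paper (which simply asserts the concatenated diagram commutes), so your treatment is if anything more careful than the published proof, and your proposed completion --- deriving it from the compatibility of Papadima's $\mu$ with the strand-forgetting projections $P_n\to P_{n-1}$, of which $\delta$ is a restriction via $PM_{n+1}\cong\ker(P_n\to P_2)$, a compatibility built into the iterated semidirect-product construction of $\mu$ --- is the natural way to justify it.
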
 
\begin{proof}
The maps $\kappa$ and $\alpha$ respect the filtration and induce a multiplicative isomorphism 
at the associated graded level, so this is true for ${\widehat{\kappa\otimes\alpha}}$.
We continue diagram (\ref{eq:diag1}) 
\begin{equation*}
\begin{diagram}
\Z[P_n]&\cong\Z[PM_{n+1}]\otimes \Z[C]&\rTo{\delta\otimes\rho} & \Z[PM_n]\otimes \Z[C_2]&\cong\Z[P_n(S^2)] \\
\dTo>{\mu}&\dTo>{\widehat{\kappa\otimes\chi}}&&\dTo>{\widehat{\kappa\otimes\alpha}}&\dTo>{\lambda} \\
\widehat{\mathcal{P} }_n &\cong\widehat{\mathcal{PM}}_{n+1}\widehat{\otimes}\Z[[y]]&\rTo{\widehat{\delta}\widehat{\otimes}\beta} & \widehat{\mathcal{PM}}_n\widehat{\otimes}\widehat{\mathcal{A}}&\cong\widehat{\mathcal{P}_n}(S^2).\\
\end{diagram}\label{eq:diag3} 
\end{equation*}
Its outer frame is diagram (\ref{eq:diag2.5}).
\end{proof}

The symmetric group $\Sigma_n$ acts on the algebras $\widehat{\mathcal {PM}}_{n}$
and $\widehat{\mathcal{P}_n}(S^2)$ by the action on the indices of $x_{i,j}$:
\begin{equation*}
\sigma(x_{i,j})= x_{\sigma(i),\sigma(j)}, \quad \sigma\in\Sigma_n.
\end{equation*}
This action preserves the defining relations (\ref{eq:m0n2}) and (\ref{eq:ihara}).
We define  the following filtered algebras as the  semi-direct products with respect to
the given action:
\begin{equation}
\widehat{\mathcal{M}_n} =
\widehat{\mathcal {PM}}_{n}\rtimes\Z[\Sigma_n],
\label{eq:MS}
\end{equation}
\begin{equation}
\widehat{\mathcal{B}_n}(S^2) =
\widehat{\mathcal {P}_{n}}(S^2)\rtimes\Z[\Sigma_n].
\label{eq:BS}
\end{equation}

According to the Markov normal form for $B_n(S^2)$ proved by R.~Gillet and J.~Van Buskirk in \cite{GVB} every element $b$ of $B(S^2)$ 
can be written uniquely in the form
\begin{equation*}
b=q S(p),
\end{equation*}
where $q\in P_n(S^2)$ and $p$ is the permutation defined by the braid $b$. 
We define the map 
\begin{equation*}
K: \Z[B_n(S^2)]\to \widehat{\mathcal{B}_n}(S^2)
\end{equation*}
by the formula
\begin{equation}
K(b)= \lambda(q) \otimes p.
\label{eq:K}
\end{equation}
The map
\begin{equation*}
K_M: \Z[ {M}_{n}]\to \widehat{\mathcal {M}}_{n}
\end{equation*}
is defined similarly using $\kappa$ instead of $\lambda$ in (\ref{eq:K}).
\begin{Theorem} The homomorphisms of abelian groups
\begin{equation*}
K_M: \Z[ {M}_{n}]\to \widehat{\mathcal {M}}_{n},
\end{equation*}
\begin{equation*}
K: \Z[B_n(S^2)]\to \widehat{\mathcal{B}_n}(S^2)
\end{equation*}
are injections, they respect the filtration, induce a multiplicative isomorphisms 
at the associated graded level and fit in the following diagram of filtered rings
\begin{equation*}
\begin{diagram}
\Z[B_n(S^2)]&\rTo{\rho_p} & \Z[M_n] \\
\dTo>{K}&&\dTo>{K_M} \\
\widehat{\mathcal{B}}_n(S^2) &\rTo{\widehat{\rho_a}} & \widehat{\mathcal{M}_n},\\
\end{diagram}\label{eq:diag4.5} 
\end{equation*}
which leads to the following diagram with isomorphisms
\begin{equation}
\begin{diagram}
\Z[B_n(S^2)]\cong& \ \Z[M_{n}]\otimes \Z[C_2]\\
\dTo>{K}&\dTo>{\widehat{K_M{\otimes\alpha}}}\\
\widehat{\mathcal{B} }_n(S^2) \ \  \cong&\widehat{\mathcal{M}}_{n}
\widehat{\otimes}\widehat{\mathcal{A}}.\\
 \end{diagram} \label{eq:isoBS}
\end{equation}
\end{Theorem}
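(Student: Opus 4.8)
The plan is to deduce every assertion from the corresponding statements about the pure groups by ``crossing with the symmetric group''. First I would observe that, under the filtered isomorphisms of Proposition~\ref{Proposition:tensor_s} and the definitions~(\ref{eq:MS})--(\ref{eq:BS}), the maps $K$ and $K_M$ are precisely $\lambda$ and $\kappa$ crossed with the identity of $\Z[\Sigma_n]$. Indeed, writing $b=qS(p)$ as in~(\ref{eq:K}), the element $b$ corresponds to $q\otimes p$ on the source side, $K(b)=\lambda(q)\otimes p$ corresponds to $\lambda(q)\cdot p$ in the semidirect product $\widehat{\mathcal{P}_n}(S^2)\rtimes\Z[\Sigma_n]$, and the same holds verbatim for $K_M$ with $\kappa$ in place of $\lambda$. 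Since $\Z[\Sigma_n]$ carries the zero filtration and is a free abelian group, the facts that $\lambda$ and $\kappa$ respect the filtration (Theorem~\ref{Theorem:un_vas_pu} and the corresponding statement for $\kappa$) give at once that $K$ and $K_M$ respect the Vassiliev filtration. The two cases run in exact parallel, so I would carry them simultaneously.

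Next I would treat the associated graded level, which is where the one genuinely new verification lies. The associated graded ring of $\Z[B_n(S^2)]$ is the crossed product of $gr^*$ of $\Z[P_n(S^2)]$ with $\Z[\Sigma_n]$: for any two permutations the discrepancy $S(p)S(p')S(pp')^{-1}$ is a pure braid, hence differs from $1$ by an element of the augmentation ideal and thus has positive filtration, so the classes $\overline{S(p)}$ multiply in degree zero exactly as in $\Z[\Sigma_n]$. The same description holds for $\widehat{\mathcal{B}_n}(S^2)$ by~(\ref{eq:BS}), whose graded object is $\mathcal{P}_n(S^2)\rtimes\Z[\Sigma_n]$. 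Under these identifications $gr(K)=gr(\lambda)\rtimes\mathrm{id}$, so $gr(K)$ is bijective because $gr(\lambda)$ is (Theorem~\ref{Theorem:un_vas_pu}). \emph{The main obstacle} is multiplicativity of $gr(K)$ across the two factors: it reduces to the $\Sigma_n$-equivariance $p\cdot gr(\lambda)(q)\cdot p^{-1}=gr(\lambda)\bigl(S(p)\,q\,S(p)^{-1}\bigr)$, which holds because conjugation of a pure generator $a_{i,j}$ by the permutation braid $S(p)$ produces $a_{p(i),p(j)}$ up to higher filtration, matching the defining action $\sigma(x_{i,j})=x_{\sigma(i),\sigma(j)}$ on the target. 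Everything else is a formal transfer from the pure case. Once $gr(K)$ and $gr(K_M)$ are multiplicative isomorphisms, injectivity of $K$ and $K_M$ is automatic: by Proposition~\ref{Proposition:res_nil} the source filtration is Hausdorff ($\bigcap_m\Phi^m=0$), and a filtered map with Hausdorff source that is injective on the associated graded cannot kill a nonzero element, since such an element has a nonzero leading class.

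Finally I would establish the two diagrams. For~(\ref{eq:diag4.5}) I would first note that the standard sections satisfy $\gamma\circ S=S_M$, so the projection $\Z[B_n(S^2)]\to\Z[M_n]$ induced by $\gamma$ (denoted $\rho_p$) carries $qS(p)$ to $\gamma_P(q)\,S_M(p)$, where $\gamma_P\colon P_n(S^2)\to PM_n$ is the restriction of $\gamma$. Thus~(\ref{eq:diag4.5}) is obtained by crossing with $\mathrm{id}_{\Z[\Sigma_n]}$ the pure square relating $\lambda$ and $\kappa$ through $\gamma_P$ and its algebraic analogue $\widehat{\mathcal{P}_n}(S^2)\to\widehat{\mathcal{PM}}_n$. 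That pure square commutes by~(\ref{eq:kala}): under the splitting $\Z[P_n(S^2)]\cong\Z[PM_n]\otimes\Z[C_2]$ the projection $\gamma_P$ is the augmentation of the $\Z[C_2]$ factor, $\lambda$ is $\widehat{\kappa\otimes\alpha}$, and $\alpha(a)=1+x$ is sent to $1$ by the augmentation $x\mapsto 0$ of $\widehat{\mathcal{A}}$, so augmenting the second factor turns $\widehat{\kappa\otimes\alpha}$ into $\kappa$. For~(\ref{eq:isoBS}) I would invoke Theorem~\ref{theo:gil-vb}(i), which splits $P_n(S^2)\cong PM_n\times C_2$ compatibly with the projection to $\Sigma_n$; crossing this splitting with $\Z[\Sigma_n]$ yields the horizontal isomorphisms, and by construction $K$ corresponds to $\widehat{K_M\otimes\alpha}$, whence the square commutes. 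This completes the argument.
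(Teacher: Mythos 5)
Your proof is correct and follows essentially the same route as the paper's: both reduce $K$ and $K_M$ to $\lambda$ and $\kappa$ crossed with $\Z[\Sigma_n]$ via Proposition~\ref{Proposition:tensor_s} and the definitions (\ref{eq:MS})--(\ref{eq:K}), transfer the filtered and graded properties from Theorem~\ref{Theorem:un_vas_pu} (with $\kappa$ and $\lambda$ tied together by diagram (\ref{eq:kala})), and obtain injectivity from the Hausdorff property of Proposition~\ref{Proposition:res_nil}. The only difference is one of detail: you make explicit the crossed-product structure of the associated graded ring, the $\Sigma_n$-equivariance of $gr(\lambda)$, and the leading-class argument for injectivity, all of which the paper leaves implicit in its citations.
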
 
\begin{proof}
It follows from Proposition~\ref{Proposition:tensor_s}, 
Theorem~\ref{Theorem:un_vas_pu} and definitions (\ref{eq:MS} -- \ref{eq:K}). 
Note that maps $\kappa$ and $\lambda$ that are involved in the definition of 
$K$ and $K_M$  are related by diagram (\ref{eq:kala}). The  injectivity follows from 
Proposition~\ref{Proposition:res_nil}. 
\end{proof}
\begin{Corollary}
The groups  $\Z[M_n]/\Phi^{m}(\Z[M_n])$ and $\widehat{\mathcal{M}_n}/\Phi^{m}(\widehat{\mathcal{M}_n})$ are isomorphic and are
torsion free. There are also isomorphisms of abelian groups.
\begin{multline}
\Z[B_n(S^2)]/\Phi^{m+1}(\Z[B_n(S^2)])\cong \widehat{\mathcal{B}_n}(S^2)/\Phi^{m+1}(\widehat{\mathcal{B}_n}(S^2))\cong \\
(\widehat{\mathcal{M}_n}/\Phi^{m+1}(\widehat{\mathcal{M}_n}))
\oplus(\bigoplus_{i=0}^{m-1}\Phi^{i}(\widehat{\mathcal{M}_n})/\Phi^{i+1}(\widehat{\mathcal{M}_n})
\otimes \Z/{2^{m-i}}).
\label{eq:iso_2i}
\end{multline}
\end{Corollary}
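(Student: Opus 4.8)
The plan is to deduce everything from the structural properties established in the preceding Theorem, namely that $K_M$ and $K$ are filtration-preserving injections inducing multiplicative isomorphisms on the associated graded objects, together with the splitting (\ref{eq:isoBS}), which identifies $\widehat{\mathcal{B}_n}(S^2)$ with $\widehat{\mathcal{M}_n}\widehat{\otimes}\widehat{\mathcal{A}}$ and $K$ with $\widehat{K_M\otimes\alpha}$. First I would record the elementary filtered-module lemma: a filtration-preserving homomorphism that induces an isomorphism on every graded quotient $\Phi^i/\Phi^{i+1}$ induces an isomorphism on each finite quotient $(-)/\Phi^m$. This follows by induction on $m$ from the compatible short exact sequences $0\to \Phi^{m-1}/\Phi^m\to (-)/\Phi^m\to (-)/\Phi^{m-1}\to 0$ for source and target: the left vertical arrow is $gr^{m-1}$ of the map (an isomorphism by hypothesis), the right one is an isomorphism by induction, so the five lemma finishes the step. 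Applying this to $K_M$ and to $K$ yields $\Z[M_n]/\Phi^{m}(\Z[M_n])\cong \widehat{\mathcal{M}_n}/\Phi^{m}(\widehat{\mathcal{M}_n})$ and $\Z[B_n(S^2)]/\Phi^{m+1}(\Z[B_n(S^2)])\cong \widehat{\mathcal{B}_n}(S^2)/\Phi^{m+1}(\widehat{\mathcal{B}_n}(S^2))$, the latter being the first isomorphism of (\ref{eq:iso_2i}).

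For torsion-freeness I would use Proposition~\ref{Proposition:res_nil}, which gives that each graded piece $\Phi^i(\Z[M_n])/\Phi^{i+1}(\Z[M_n])$ is torsion free. An extension of a torsion-free abelian group by a torsion-free abelian group is again torsion free, since a torsion element maps to a torsion element of the quotient, hence to $0$, hence lies in the torsion-free subgroup and is therefore $0$. Consequently the finite iterated extension $\Z[M_n]/\Phi^m$ of the groups $\Phi^0/\Phi^1,\dots,\Phi^{m-1}/\Phi^m$ is torsion free, and so is the isomorphic group $\widehat{\mathcal{M}_n}/\Phi^m$.

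It remains to compute the quotient of the right-hand side as an abelian group, which is the real content. Using (\ref{eq:isoBS}) I work inside $\widehat{\mathcal{M}_n}\widehat{\otimes}\widehat{\mathcal{A}}$ and split $\widehat{\mathcal{A}}=\Z\cdot 1\oplus\Z_2\cdot x$ as an abelian group. The summand $\widehat{\mathcal{M}_n}\otimes 1$ carries precisely the filtration of $\widehat{\mathcal{M}_n}$, so it contributes $\widehat{\mathcal{M}_n}/\Phi^{m+1}(\widehat{\mathcal{M}_n})$, the first term of (\ref{eq:iso_2i}). On the summand $\widehat{\mathcal{M}_n}\widehat{\otimes}\Z_2 x$, using that $\Phi^q(\widehat{\mathcal{A}})$ meets $\Z_2 x$ in $2^{q-1}\Z_2 x$ (forced by $x^2=-2x$, so that $gr^q\widehat{\mathcal{A}}=\Z/2$ for $q\ge 1$) and the tensor-product filtration, I find that the induced level $m+1$ is the closure of $\sum_{j=0}^{m} 2^{j}\,\Phi^{m-j}(\widehat{\mathcal{M}_n})\otimes x$. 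Writing $G_i=\Phi^i(\widehat{\mathcal{M}_n})/\Phi^{i+1}(\widehat{\mathcal{M}_n})$, which is finitely generated free (torsion free by the previous step and finitely generated from the enveloping-algebra description), and using completeness of $\widehat{\mathcal{M}_n}$ to choose a splitting $\widehat{\mathcal{M}_n}\cong\prod_i G_i$, the above subgroup meets the $i$-th factor in exactly $2^{\max(0,m-i)}(G_i\otimes\Z_2)$. Hence the $x$-part quotient is $\bigoplus_{i=0}^{m-1}G_i\otimes\Z/2^{m-i}$, and assembling the two summands gives (\ref{eq:iso_2i}).

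The main obstacle is this last computation: one must track carefully how the $2$-adic filtration of $\widehat{\mathcal{A}}$ interlocks with the filtration degrees of $\widehat{\mathcal{M}_n}$, so that an element of filtration degree $i$ in $\widehat{\mathcal{M}_n}$ contributes, after multiplication by $x$, an exact $2^{m-i}$-torsion class rather than a mere $2$-torsion one. This is precisely the level-by-level refinement of the order-function identity $v_2(z(a-1))=v(z)+k+1$ used in Proposition~\ref{Proposition:res_nil}, and it is the point at which the $2$-torsion specific to the sphere enters the answer.
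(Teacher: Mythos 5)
Your proposal is correct and takes essentially the same route as the paper's proof: deduce the finite-quotient isomorphisms from the associated-graded isomorphisms of the preceding theorem, split $\widehat{\mathcal{A}}\cong\Z\oplus\Z_2$ in (\ref{eq:isoBS}), and extract the $2$-power torsion by tracking filtration degrees, which you do via a splitting $\widehat{\mathcal{M}}_n\cong\prod_i\Phi^i(\widehat{\mathcal{M}}_n)/\Phi^{i+1}(\widehat{\mathcal{M}}_n)$ where the paper uses a filtration-adapted basis $\{g_{i,k}\}$ of the finite quotient. The only minor differences are presentational: the paper obtains torsion-freeness and finite generation by viewing the quotients as subgroups of the corresponding quotients for braids of the disc, while you obtain them by iterated extensions of the torsion-free graded pieces from Proposition~\ref{Proposition:res_nil}; both are valid.
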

\begin{proof}
The groups  $\Z[M_n]/\Phi^{m}(\Z[M_n])$ and $\widehat{\mathcal{M}_n}/\Phi^{m}(\widehat{\mathcal{M}_n})$ are subgroups of corresponding groups for 
the pure braid groups of a disc, so they are finitely generated free 
abelian groups. Isomorphisms follow from the diagrams like (\ref{eq:diag1}). 
Last isomorphism of formula (\ref{eq:iso_2i}) follows from the bottom
isomorphism of the formula (\ref{eq:isoBS}). Namely, 
$\widehat{\mathcal { A}} = \Z\oplus\Z_2$, so 
$$
\widehat{\mathcal{M}}_{n}
\widehat{\otimes}\,\widehat{\mathcal{A}}\cong \widehat{\mathcal{M}}_{n}\oplus
\widehat{\mathcal{M}}_{n}
\widehat{\otimes}\,\Z_2 
$$
and hence
$$
(\widehat{\mathcal{M}}_{n}
\widehat{\otimes}\,\widehat{\mathcal{A}})/
\Phi^{m+1}(\widehat{\mathcal{M}}_{n}
\widehat{\otimes}\,\widehat{\mathcal{A}})\cong (\widehat{\mathcal{M}}_{n}/
\Phi^{m+1}(\widehat{\mathcal{M}_n}))\oplus
((\widehat{\mathcal{M}}_{n}
\widehat{\otimes}\,\Z_2 )/\Phi^{m+1}(\widehat{\mathcal{M}}_{n}
\widehat{\otimes}\,\Z_2 )).
$$
The quotient group $\widehat{\mathcal{M}_n}/\Phi^{m}(\widehat{\mathcal{M}_n})$ 
is a free abelian finitely generated group, so we choose  its finite basis 
$\{g_{i,k}\}$, such that $g_{i,k}\in \Phi^i(\widehat{\mathcal{M}}_{n})$. 
Then every element of  $(\widehat{\mathcal{M}}_{n}
\widehat{\otimes}\,\Z_2 )/\Phi^{m+1}(\widehat{\mathcal{M}}_{n}
\widehat{\otimes}\,\Z_2 )$
can be written in the form
$$
\sum_{i+j\le m}\alpha_{i,k,j} \, g_{i,k}\otimes 2^jx,
$$
where $\alpha_{i,k,j}$ can be equal to $0$ or $1$. As we have
$$
2^{m-i}(g_{i,k}\otimes x)= g_{i,k}\otimes 2^{m-i} x\in \Phi^{m+1}(\widehat{\mathcal{M}}_{n}
\widehat{\otimes}\,\Z_2 )
$$
the last isomorphism in (\ref{eq:iso_2i}) follows.
\end{proof}

\begin{Corollary}
There exist two elements in $B_n(S^2)$ which are not distinguished by any 
Vassiliev invariant to an abelian group without 2-torsion. 
For any couple of elements $a\not=b$  of $B_n(S^2)$ there exists a natural number $k$ and Vassiliev invariant $v$ to an abelian  group $A$ with 
 an element 
of order $2^k$ such that $v$ distinguish $a$ and $b$:
$$
v(a)\not= v(b).
$$   
\label{Corollary:not_dis} 
\end{Corollary}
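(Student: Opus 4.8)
My plan is to extract both claims directly from the isomorphism (\ref{eq:iso_2i}), which describes the universal Vassiliev quotients of $\Z[B_n(S^2)]$ completely. The key structural fact is that the only torsion appearing in $\Z[B_n(S^2)]/\Phi^{m+1}$ lives in the summand $\bigoplus_{i=0}^{m-1}\Phi^{i}(\widehat{\mathcal{M}_n})/\Phi^{i+1}(\widehat{\mathcal{M}_n})\otimes \Z/2^{m-i}$, all of which is $2$-power torsion; the complementary summand $\widehat{\mathcal{M}_n}/\Phi^{m+1}(\widehat{\mathcal{M}_n})$ is free abelian by the preceding Corollary. A Vassiliev invariant of degree $\le m$ with values in an abelian group $A$ is precisely a homomorphism $\Z[B_n(S^2)]/\Phi^{m+1}\to A$, so the set of values of all such invariants is governed by this torsion summand together with the free part.

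For the first (impossibility) claim, I would proceed as follows. Take any abelian group $A$ without $2$-torsion. Any homomorphism from a $2$-power-torsion group to $A$ is zero, since an element of order $2^k$ must map to an element whose order divides $2^k$, forcing the image to be $0$. Hence every Vassiliev invariant $v\colon B_n(S^2)\to A$ factors through the projection of $\Z[B_n(S^2)]$ onto the torsion-free part $\widehat{\mathcal{M}_n}/\Phi^{m+1}$, i.e.\ through the quotient map $\rho_p\colon \Z[B_n(S^2)]\to \Z[M_n]$ (compare the commutative square in the Theorem preceding this Corollary, together with diagram~(\ref{eq:isoBS})). Therefore it suffices to produce two elements $a\ne b$ of $B_n(S^2)$ that have the same image under $\gamma\colon B_n(S^2)\to M_n$; concretely, $\Delta^2\ne 1$ in $B_n(S^2)$ (it generates the central $C_2$ by Theorem~\ref{theo:gil-vb}(i)), while $\gamma(\Delta^2)=1$ in $M_n$ by the relation $\Delta^2=1$ (\ref{eq:sphe_mcD}). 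Thus $v(\Delta^2)=v(1)$ for every $v$ to a $2$-torsion-free $A$, and the pair $a=\Delta^2$, $b=1$ witnesses the assertion.

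For the second (separation) claim, fix $a\ne b$. By Proposition~\ref{Proposition:res_nil} the intersection $\bigcap_m \Phi^m(\Z[B_n(S^2)])$ is trivial, so the element $a-b\in\Z[B_n(S^2)]$ is nonzero and there is a smallest $m$ with $a-b\notin\Phi^{m+1}$. Its nonzero image in $\Z[B_n(S^2)]/\Phi^{m+1}$ then has a nontrivial component under the isomorphism (\ref{eq:iso_2i}), either in the free summand or in one of the $\Z/2^{m-i}$ summands. In either case I can pick $A$ to be a single cyclic group ($\Z$ in the free case, or $\Z/2^k$ with $k=m-i$ in the torsion case) and let $v$ be the composite of the quotient map $\Z[B_n(S^2)]/\Phi^{m+1}\to A$ projecting onto the relevant coordinate; this $v$ is a Vassiliev invariant of degree $\le m$ with $v(a)\ne v(b)$, and $A$ contains an element of order $2^k$ exactly in the torsion case.

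The only genuinely nontrivial point is that the separating invariant can be taken into a group of the prescribed form with an element of order $2^k$; this is immediate from the explicit torsion decomposition (\ref{eq:iso_2i}), so I expect no serious obstacle. The main thing to state carefully is the reduction in the impossibility claim, namely that vanishing of all homomorphisms out of the $2$-power-torsion summand forces every $2$-torsion-free invariant to factor through $M_n$; once that is in place, exhibiting $\Delta^2$ versus $1$ finishes the argument.
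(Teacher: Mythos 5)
Your proposal is correct and takes essentially the same route as the paper: the paper likewise exhibits the pair $1$ and $\tau=(\sigma_1\cdots\sigma_{n-1})^n=\Delta^2$, identifies their difference (via Propositions~\ref{Proposition:tensor_s}, \ref{Proposition:ring_pr} and the formula $\alpha(a)=1+x$) with the element $x$ lying in the $2$-torsion summand of (\ref{eq:iso_2i}), and concludes that it is killed by any map to a group without $2$-torsion, while the second claim is obtained, just as you do, from Proposition~\ref{Proposition:res_nil} together with the decomposition (\ref{eq:iso_2i}). The only point to patch is your free-summand case, where $A=\Z$ does not literally contain an element of order $2^k$: either compose the separating projection with the inclusion $\Z\hookrightarrow\Z\oplus\Z/2$, or reduce that coordinate modulo $2^k$ for $2^k$ exceeding its absolute value, so that the target can always be taken with $2$-power torsion as the statement requires.
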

M.~Eisermann in \cite{Ei1, Ei2} gave an example of a couple of elements 
in $B_n(S^2)$ which are
not distinguished by Vassiliev invariants to an abelian group without 2-torsion.  These
elements are: the trivial braid that represents the unit in the braid group
$B_n(S^2)$  and the braid $\tau= (\sigma_1 \sigma_2 \dots \sigma_{n-2}\sigma_{n-1})^n$ (cf. (\ref{eq:sphe_mc})).
In the context of isomorphisms of Propositions~\ref{Proposition:tensor_s}
and \ref{Proposition:ring_pr} these elements correspond 
to element $1$ and $a$ in $\Z[PM_n]$. By the formulae (\ref{eq:acy})
$a$ maps to $1+x\in \widehat{\mathcal{A}}$. The difference between $1$ and $1+x$
is equal to $x$ and according to (\ref{eq:iso_2i}) $x$ corresponds to a 
generator of
$\Z/2^k$ in 
$\Z[B_n(S^2)]/\Phi^{m+1}(\Z[B_n(S^2)])$2. So it is mapped to $0$ by any map to an abelian group without 
$2$-torsion. 

\section{Examples\label{sec:ex}}

1. $n=4.$ The pure braid group  $P_4(S^2)$ of a 2-sphere is isomorphic to the direct product
of the cyclic group of order 2 (generated by $\Delta^2$) and the pure braid group on one strand of a 2-sphere with three points deleted, it is the fundamental group 
of disc with two points deleted, that is a free group $F_2$ on two generators.
Its associated graded Lie algebra is a direct sum of central $\Z/2$ and the
free Lie algebra on two generators. The pure mapping class group 
${PM}_{4}$ is isomorphic to a free group on two generators. According 
to Theorem~\ref{theo:lie_m0n} its associated graded Lie algebra is the
 free Lie algebra
 on two generators. The universal Vassiliev invariant
 for ${PM}_{4}$ is nothing but Magnus expansion 
$\Z[F_2]\stackrel{\mu_e}\rightarrow \Z\langle\langle x_1, x_2\rangle\rangle $
 and the universal invariant
for  $P_4(S^2)$ is 
\begin{equation*}
 \Z[F_2]\otimes\Z[C_2] \stackrel{\widehat{\mu_e\otimes\alpha}}\rightarrow \Z\langle\langle x_1, x_2\rangle\rangle\widehat{\otimes}
 \widehat{\mathcal{A}}.
\end{equation*}

2. $n=5.$
 The pure mapping class group 
${PM}_{5}$ is isomorphic to a semi-direct product of a
free group on three generators and a free group on two generators.
$${PM}_{5}\cong F\langle a_{1,2},  a_{1,3},  a_{1,4}\rangle\rtimes
F\langle  a_{2,3},  a_{2,4} \rangle.     
$$ 
We write every element of ${PM}_{5}$  in the Markov normal form 
$$
f_3 f_2, \ \  \text{where} f_3\in F\langle a_{1,2},  a_{1,3},  a_{1,4}\rangle,
\  f_2\in F\langle  a_{2,3},  a_{2,4}\rangle
$$
and we define the universal invariant $\mu: {PM}_{5}\to\widehat{\mathcal{M}}_{5}$
by the formula
$$
\mu(f_3 f_2) = \mu_3(f_3) \, \mu_2(f_2),
$$
where $\mu_3$ and $\mu_2$ are defined as follows
$$
\mu_3(a_{1,j}) = 1+B_{1,j}, \ j=2, 3, 4,
$$
$$
\mu_2(a_{2,k}) = 1+ B_{2,k},  \ k=3, 4.
$$

\end{document}